\titlespacing*{\paragraph}{0pt}{1ex plus 1ex minus .2ex}{1em}
\newtheorem{theorem}{Theorem}[section]
\newtheorem{lemma}[theorem]{Lemma} 
\newtheorem{corollary}[theorem]{Corollary} 
\newtheorem{conjecture}[theorem]{Conjecture}
\newcommand{\cC}{{\mathcal C}}
\newcommand{\cE}{{\mathcal E}}
\newcommand{\cL}{{\mathcal L}}
\newcommand{\cP}{{\mathcal P}}
\newcommand{\cS}{{\mathcal S}}
\newcommand{\cT}{{\mathcal T}}
\newcommand{\cM}{{\mathcal M}}
\title{On the Maximum Agreement Subtree Conjecture for Balanced Trees}
\author[1]{Magnus Bordewich}
\author[2]{Simone Linz}
\author[3,5]{Megan Owen}
\author[4,5]{Katherine St.~John}
\author[6]{Charles Semple}
\author[7]{Kristina Wicke}
\affil[1]{Department of Computer Science, Durham University, United Kingdom}
\affil[2]{School of Computer Science, University of Auckland, Auckland, New Zealand}
\affil[3]{Department of Mathematics, Lehman College, City University of New York, United States}
\affil[4]{Department of Computer Science, Hunter College, City University of New York, United States}
\affil[5]{Division of Invertebrate Zoology, American Museum of Natural History, New York, United States}
\affil[6]{School of Mathematics and Statistics, University of Canterbury, Christchurch, New Zealand}
\affil[7]{Institute of Mathematics and Computer Science, University of Greifswald, Greifswald, Germany}
\begin{document}
\maketitle

\begin{abstract}
We give a counterexample to the conjecture of Martin and Thatte that two balanced rooted binary leaf-labelled trees on $n$ leaves have a maximum agreement subtree (MAST) of size at least $n^{\frac{1}{2}}$. In particular, we show that for any $c>0$, there exist two balanced rooted binary leaf-labelled trees on $n$ leaves such that any MAST for these two trees has size less than $c n^{\frac{1}{2}}$. We also improve the lower bound of the size of such a MAST to $n^{\frac{1}{6}}$. 
\end{abstract}

{\it Keywords:} balanced tree, caterpillar, maximum agreement subtree, phylogenetic tree 

\section{Introduction}

Leaf-labelled trees, also called phylogenetic trees, are used to represent inferred evolutionary histories of sets of species \cite{sempleSteelBook}. Due to different data sets  and methods that are used to infer  such histories, even phylogenetic trees that have been reconstructed for the same set of species often differ. To quantify the dissimilarities between trees, several tree metrics are commonly used that compute distances between two phylogenetic trees (for a recent review, see~\cite{st2017shape}). Moreover, to summarise the information that two or more phylogenetic trees have in common, maximum agreement subtrees (MASTs) have become a popular tool. Historically, the concept of a MAST was introduced by Finden and Gordon \cite{finden1985} as a way of measuring  similarities among an arbitrary number of phylogenetic trees. Intuitively, an agreement subtree for a collection $\cP$ of phylogenetic trees is a leaf-labelled tree $\cM$ that can be embedded in each tree in $\cP$. If $\cM$ has the maximum number of leaves over all agreement subtrees for $\cP$, then $\cM$ is a maximum agreement subtree  (formal definitions are given below).

Various aspects of MASTs have been well studied over the years. While the problem of computing a MAST for at least three phylogenetic trees is NP-hard in general~\cite{amir1997maximum}, several polynomial-time  algorithms have been developed to compute a MAST for two binary phylogenetic trees \cite{cole2000n,goddard1994agreement,steel1993kaikoura}. 
In terms of a lower bound on the size of a MAST, Martin and Thatte \cite{martin2013maximum} have shown that two unrooted binary phylogenetic trees on $n$ leaves have a MAST on $\Omega(\sqrt{\log n})$ leaves. Their result improves on a lower bound of $\Omega(\log\log n)$ leaves that was  previously established by Steel and Sz\'ekely \cite{steel2009improved}.
Markin \cite{markin2018extremal} has recently closed the gap between lower and upper bound asymptotics by showing that the minimum MAST for two unrooted binary phylogenetic trees has  $\Theta(\sqrt{\log n})$ leaves.  Turning to phylogenetic trees of a particular shape, Martin and Thatte
\cite{martin2013maximum} have  investigated the size of a MAST for two balanced rooted binary phylogenetic trees on $n=2^m$ leaves. In particular, they have shown that a MAST for two such trees has at least $2^{\beta m}$ leaves, where $\beta \sim 0.149$ and, subsequently, conjectured that  two balanced rooted binary phylogenetic trees on $n$ leaves have a MAST with at least $n^{\frac 1 2}$ leaves. The main result of this paper, disproves their conjecture for infinitely many values of $n$ via a family of counterexamples. 

For completeness, it is also worth mentioning that 
lower and upper bounds on the expected size of a MAST for two phylogenetic trees that are generated under the uniform or Yule-Harding distribution have been established in \cite{bernstein2015bounds,bryant2003size,misra2019bounds}. Interestingly, in the context of this paper, Misra and Sullivant~\cite{misra2019bounds} have shown that the expected size of a MAST for two balanced rooted binary phylogenetic trees on $n$ leaves is $\Theta(n^{\frac 1 2})$ under the uniform distribution.

To formally state the main results, we require some terminology. A {\it rooted phylogenetic $X$-tree} $\cT$ is a rooted tree with leaf set $X$ and with no degree-two vertices, except for the root which has degree at least two. For technical reasons, if $|X|=1$, we additionally allow $\cT$ to consist of the single vertex in $X$, in which case, this vertex is the root as well as the leaf of $\cT$. The {\it size} of $\cT$ is $|X|$. If $|X|=1$ or $\cT$ has the property that the root has degree two and all other interior vertices have degree three, then $\cT$ is {\it binary}. Furthermore, the {\it height} of $\cT$ is the number of edges on the longest path from the root to a leaf. If $\cT$ is binary, we say $\cT$ is {\em balanced} if the size of $\cT$ is $2^m$ for some non-negative integer $m$ and the height of $\cT$ is $m$.

Let $\cT$ be a rooted binary phylogenetic $X$-tree, and let $Y$ be a subset of $X$. Then, the {\it restriction of $\cT$ to $Y$}, denoted by $\cT|Y$, is the rooted phylogenetic $Y$-tree obtained from the minimal subtree of $\cT$ that connects all leaves in $Y$ by suppressing all non-root degree-two vertices. Now let $\cS$ be a rooted binary phylogenetic $X'$-tree. If $Y$ is a subset of $X\cap X'$ such that $\cS|Y$ and $\cT|Y$ are isomorphic, we call $\cS|Y$ an {\it agreement subtree} of $\cS$ and $\cT$. If, amongst all agreement subtrees of $\cS$ and $\cT$, the restriction $\cS|Y$ is of maximum size, then it is called a {\it maximum agreement subtree} (MAST) of $\cS$ and $\cT$, and $|Y|$ is denoted by $\textrm{mast}(\cS, \cT)$.

The first main result of the paper, Theorem~\ref{mainresult}, shows that two balanced rooted binary phylogenetic trees on $n$ leaves do not necessarily have a MAST of size at least $n^{\frac{1}{2}}$.

\begin{theorem}\label{mainresult}
For any $c>0$, there exist balanced rooted binary phylogenetic $X$-trees $\cS$ and $\cT$, where $n=|X|$, such that ${\rm mast}(\cS, \cT) < cn^{\frac{1}{2}}$.
\end{theorem}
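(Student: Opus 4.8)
The plan is to reduce the statement to the apparently weaker task of exhibiting a \emph{single} family of balanced trees $\cS_m,\cT_m$ (on $n=2^m$ leaves) for which $\mathrm{mast}(\cS_m,\cT_m)=O(n^{\alpha})$ with some fixed exponent $\alpha<\tfrac12$. Once such a family is in hand, for any $c>0$ we have $\mathrm{mast}(\cS_m,\cT_m)/n^{1/2}=O(n^{\alpha-1/2})\to 0$ as $m\to\infty$, so every sufficiently large $m$ yields $\mathrm{mast}(\cS_m,\cT_m)<cn^{1/2}$, and the ``for all $c$'' quantifier becomes free. Thus the entire difficulty is to \emph{beat the square-root exponent by a constant}.

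To build the family I would use a self-similar (recursive) construction. Let $\cS=\cS_m$ be the standard balanced tree whose $2^m$ leaves are addressed by $\{0,1\}^m$, so that the clusters of $\cS$ are exactly the common-prefix classes. I would then take $\cT=\cT_m$ to be the same balanced \emph{shape}, but relabelled by a permutation $\sigma$ of $\{0,1\}^m$ chosen to be \emph{crossing} and self-similar: at every internal level, $\sigma$ mixes the two children blocks of $\cS$ so that each bipartition of the leaves realised by a node of $\cT$ cuts across the corresponding bipartition in $\cS$ (a bit-reversal permutation, or a recursive interleaving of quarters, are the natural candidates). The design goal is that no large cluster of $\cS$ is a cluster of $\cT$, and vice versa, so that the two hierarchies share as little nested structure as possible.

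The engine is a recursive upper bound on the MAST. Let $\cM=\cS|Y=\cT|Y$ be an agreement subtree and consider the bipartition of $Y$ induced at the root of $\cM$. Since $\cM$ is simultaneously a restriction of $\cS$ and of $\cT$, this bipartition must be the top split of $\cS|Y$ \emph{and} of $\cT|Y$; that is, it must be compatible with the two top splits $\{L_S,R_S\}$ and $\{L_T,R_T\}$ of the halves of $\cS$ and $\cT$ restricted to $Y$. Because $\sigma$ is crossing, $L_S,R_S$ are generic with respect to $L_T,R_T$, so a common top split forces $Y$ either to lie essentially inside one half or to concentrate in a bounded number of deeper sub-blocks, each of which is, up to relabelling, a scaled copy of an earlier pair $(\cS_{m-d},\cT_{m-d})$ for some fixed height drop $d$. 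Summing the contributions over these few sub-blocks yields a recurrence of the form
\[
\mathrm{mast}(\cS_m,\cT_m)\ \le\ \rho\cdot \mathrm{mast}(\cS_{m-d},\cT_{m-d}),
\]
with the construction arranged so that the effective branching factor satisfies $\rho<2^{d/2}$. Unravelling the recurrence, with a fixed small pair of trees as a constant-size base case, then gives $\mathrm{mast}(\cS_m,\cT_m)=O(2^{\alpha m})=O(n^{\alpha})$ where $\alpha=\tfrac{\log_2\rho}{d}<\tfrac12$, exactly as the first step requires. (Equivalently, one can package this as a submultiplicativity statement for $\mathrm{mast}$ under tree composition applied to a base pair with $\mathrm{mast}<\sqrt N$.)

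The main obstacle is the split-matching combinatorics that produces $\rho<2^{d/2}$. Two things must be controlled at once. First, the crossing permutation must genuinely prevent the agreement subtree from aligning cheaply with the halves: a naive alignment only gives the useless bound $\mathrm{mast}(\cS_m,\cT_m)\le 2\,\mathrm{mast}(\cS_{m-1},\cT_{m-1})$, which recurses to the trivial exponent and teaches nothing. Second, enumerating the few ways a single bipartition can be simultaneously realised in two crossing hierarchies — including the degenerate cases where $Y$ falls wholly inside one half, and the leaf loss incurred when $Y$ must avoid a symmetric-difference set — must never cost more than the target branching factor. I expect that obtaining a clean constant forces one to look several levels below the root at once (taking $d>1$, since for $d=1$ the integer branching count cannot beat $\sqrt2$), and to carry out a careful case analysis balancing the number of surviving sub-blocks against the height drop. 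This is precisely where the self-similarity of $\sigma$ earns its keep, as it lets the same estimate be reapplied verbatim at every scale. Pinning down the exact value of $\rho$, and hence the exponent $\alpha$, is the crux; the reduction in the first paragraph and the unravelling of the recurrence are then routine.
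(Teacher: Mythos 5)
There is a genuine gap, and it sits exactly where all the content of the theorem lies. Your outer shell is sound: the reduction to a fixed-exponent family is valid, and your parenthetical submultiplicativity remark can in fact be proved --- if $\cS_0\circ\cS_1$ denotes the composition that replaces each leaf of $\cS_0$ by a copy of $\cS_1$ on its own label block, then the blocks are common to both composed trees, each block contributes at most ${\rm mast}(\cS_1,\cT_1)$ leaves to any agreement set, and one representative per occupied block yields an agreement set of $(\cS_0,\cT_0)$; hence ${\rm mast}$ is submultiplicative, composition preserves balancedness, and a \emph{single} balanced pair on $N$ leaves with ${\rm mast}<\sqrt N$ would give a fixed exponent $\log_N({\rm mast})<\tfrac12$ by iteration. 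But this only shows that the entire theorem is concentrated in producing that one base pair (equivalently, your recurrence with $\rho<2^{d/2}$), and that is precisely what you have not done: the ``crossing self-similar permutation'' and the bound $\rho<2^{d/2}$ are stated as design goals, deferred to a ``careful case analysis'' that is never carried out.

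Worse, your proposed candidates provably fail, for a reason your design principle overlooks. If $\cT$ is $\cS$ relabelled by a permutation $\sigma$ of the addresses $\{0,1\}^m$, then any set $Y$ of labels fixed pointwise by $\sigma$ satisfies $\cS|Y=\cT|Y$, so ${\rm mast}(\cS,\cT)\ge|{\rm Fix}(\sigma)|$. The bit-reversal permutation fixes exactly the $2^{\lceil m/2\rceil}$ palindromic addresses, so that pair has ${\rm mast}\ge n^{1/2}$: it does not even meet the conjectured bound, let alone beat it --- and this despite the fact that $\cS$ and $\cT$ share no nontrivial clusters (prefix classes versus suffix classes), which shows that ``no large cluster of $\cS$ is a cluster of $\cT$'' is not a sufficient design goal. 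Self-similarity tends to \emph{create} large coherently-mapped leaf sets rather than destroy them, and controlling all of them (not just fixed points or common clusters) is the real difficulty. The paper resolves it by a deliberately non-self-similar mechanism: a grid labelling (Lemma~\ref{label_distribution}) makes every pair of height-$(2^k-1)$ blocks of $\cS$ and $\cT$ overlap in exactly $2^k$ labels; those overlaps are arranged as pairs of anti-caterpillars, which is possible by the perfect caterpillar packing of Corollary~\ref{c:caterpillars}; each block pair then contributes at most $2$ leaves (Lemma~\ref{lem:packingCaterpillars}); and an induction on the block structure (Lemma~\ref{claim3}) gives ${\rm mast}\le 2\cdot\max\{p,q\}$, i.e.\ twice the number of blocks. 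The crucial feature is that the overlap structure \emph{degenerates} at the bottom scale (anti-caterpillars, where agreement collapses to size $2$) instead of reproducing the original problem at a smaller scale; the resulting family has exponents $(2^k-k)/(2^{k+1}-k-2)\to\tfrac12$ and beats $n^{1/2}$ only by the factor $2^{1-k/2}$, with $k$ chosen after $c$. Until you exhibit a concrete $\sigma$ (or any base pair) and prove a bound of the form $\rho<2^{d/2}$, your proposal is a plan for a proof, not a proof.
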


\noindent Theorem~\ref{mainresult} disproves the aforementioned conjecture by Martin and Thatte~\cite[Conjecture 20]{martin2013maximum} that we state next.

\begin{conjecture}[Martin and Thatte~\cite{martin2013maximum}]\label{conjecture}
If $\cS$ and $\cT$ are two balanced rooted binary phylogenetic $X$-trees, where $n=|X|$, then ${\rm mast}(\cS, \cT) \geq n^{\frac 1 2}$.
\end{conjecture}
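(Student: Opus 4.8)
The statement is a lower bound, so the natural route is induction on $m$, where $n=2^m$. I would set $f(m)$ to be the minimum of ${\rm mast}(\cS,\cT)$ over all pairs of balanced rooted binary phylogenetic trees on $2^m$ leaves, and aim to prove $f(m)\ge 2^{m/2}$. Since a balanced tree of height $m$ decomposes at its root into two balanced subtrees of height $m-1$, and at the next level into four balanced subtrees of height $m-2$, the plan is to establish the two-level recurrence $f(m)\ge 2\,f(m-2)$; together with the base cases $f(0)=1$ and $f(1)=2$, this yields $f(m)\ge 2^{\lceil m/2\rceil}\ge 2^{m/2}$ and hence the claimed bound.

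To obtain the factor of two per two levels, I would use a gluing argument. Write the root split of $\cS$ as $X=A_{\cS}\sqcup B_{\cS}$ and of $\cT$ as $X=A_{\cT}\sqcup B_{\cT}$, each block of size $2^{m-1}$. If one can locate an agreement subtree $\cM_1$ whose leaves lie in $A_{\cS}\cap A_{\cT}$ and a second agreement subtree $\cM_2$ whose leaves lie in $B_{\cS}\cap B_{\cT}$, then, because $\cM_1$ and $\cM_2$ are separated by the root edge in both $\cS$ and $\cT$, they can be joined under a common root to form a single agreement subtree of size $|\cM_1|+|\cM_2|$ (this only lower-bounds ${\rm mast}$, which is all we need). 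Descending one more level and using the four-block partitions $\{X_i\}$ of $\cS$ and $\{Y_j\}$ of $\cT$, where each $X_i,Y_j$ is the leaf set of a balanced height-$(m-2)$ subtree, the task reduces to finding two disjoint, suitably separated pairs of blocks whose intersections each support an agreement subtree of size $f(m-2)$; the inductive hypothesis would then supply those pieces.

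The main obstacle — and, as the abstract already advertises, the reason this conjecture is in fact false — is the control of balance under restriction. The intersections $X_i\cap Y_j$ can be wildly unbalanced: a single entry of the $4\times 4$ intersection matrix, whose rows and columns all sum to $2^{m-2}$, may carry almost an entire block while the others are nearly empty. Restricting a balanced height-$(m-2)$ subtree to such an intersection destroys the balanced structure, so the inductive hypothesis, stated only for balanced trees, no longer applies to the pieces one extracts. A naive accounting recovers only a recurrence of the form $f(m)\ge c\,f(m-1)$ with $c<\sqrt2$, giving the Martin--Thatte exponent $\beta\approx 0.149$ rather than $\tfrac12$. Pushing $c$ up to $\sqrt2$ would require an amortized or potential-function argument tracking the cumulative loss of balance across levels while still guaranteeing a doubling; I would expect this to be where the argument genuinely breaks, and Theorem~\ref{mainresult} shows that no such strengthening can succeed, since the true worst-case MAST size is $o(n^{1/2})$.
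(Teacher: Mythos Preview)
The statement you were asked to prove is Conjecture~\ref{conjecture}, which the paper does not prove but \emph{disproves}: Theorem~\ref{mainresult} (via the explicit construction of Theorem~\ref{t:construction}) shows that for every $c>0$ there are balanced trees with ${\rm mast}(\cS,\cT)<cn^{1/2}$. So there is no ``paper's own proof'' to compare against, and your proposal is, appropriately, not a proof but a diagnosis of why the natural inductive attack fails. That diagnosis is correct.

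Your identification of the obstacle---restriction to $X_i\cap Y_j$ destroys balance, so the inductive hypothesis for balanced trees no longer applies to the pieces---is exactly the mechanism the paper's counterexample exploits. In the construction of Section~\ref{sec:proofs}, the labels are arranged so that each height-$h_2$ subtree $\cS_i$ meets each $\cT_j$ in precisely $2^{h_2-h_1}$ leaves, and the two restrictions form a pair of anti-caterpillars; by Lemma~\ref{lem:packingCaterpillars} this forces ${\rm mast}(\cS_i,\cT_j)=2$, and Lemma~\ref{claim3} then caps the global MAST at $2\cdot 2^{h_1}$. Choosing $h_1=2^k-k-1$, $h_2=2^k-1$ makes this $2^{2^k-k}$ against $n^{1/2}=2^{2^k-k/2-1}$, giving the $o(n^{1/2})$ behaviour.

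It is also worth noting that the inductive scheme you sketch---track overlaps, get a recurrence $f(m)\ge c\,f(m-1)$ with $c<\sqrt{2}$---is precisely what the paper \emph{does} carry out in Section~\ref{sec:lowerBounds} to prove the positive result Theorem~\ref{thm:lowerbound}. There the authors work with $g(h_1,h_2,t)$ (heights and overlap size as parameters, rather than assuming balance of the restrictions) and a five-case pigeonhole on the four intersection sizes, obtaining the exponent $0.17$ in place of Martin and Thatte's $0.149$. So your ``naive accounting'' remark is accurate: that route gives a genuine lower bound, just not $n^{1/2}$.
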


{The second main result, Theorem~\ref{thm:lowerbound}, slightly improves the lower bound on the size of a MAST for a pair of balanced rooted binary phylogenetic trees given in \cite{martin2013maximum} from $n^\beta$, where $\beta\sim 0.149$, to $n^{0.17}$.}
\begin{theorem}\label{thm:lowerbound}
{If $\cS$ and $\cT$ are two balanced rooted binary phylogenetic $X$-trees, where $n=|X|$, then ${\rm mast}(\cS, \cT)\ge n^{0.17}>n^{\frac{1}{6}}$.}
\end{theorem}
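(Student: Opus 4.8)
The plan is to reduce the statement to a recursion on a two-parameter lower-bound function and then to solve that recursion. Because the restriction of a balanced tree to a subset of its leaves is in general no longer balanced, the natural recursion does not stay within the class of balanced trees; the key device is therefore to track height and leaf number separately. I would define $M(N,h)$ to be the minimum of $\mathrm{mast}(\cS',\cT')$ over all pairs of rooted binary phylogenetic trees $\cS',\cT'$ on a common leaf set of size $N$ and of height at most $h$. A balanced tree on $n=2^m$ leaves has height exactly $m=\log_2 n$, and any restriction of it has height at most $m$; hence $\mathrm{mast}(\cS,\cT)\ge M(2^m,m)$ for balanced $\cS,\cT$, and it suffices to bound $M(2^m,m)$ from below.

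First I would establish the basic recursion from the root bipartitions. Writing the root split of $\cS$ as $X=S_1\sqcup S_2$ and that of $\cT$ as $X=T_1\sqcup T_2$, the four blocks $S_i\cap T_j$ partition $X$, and one of the two \emph{diagonal} pairings, say $\{S_1\cap T_1,\ S_2\cap T_2\}$, carries at least half the leaves. Agreement subtrees built separately on $S_1\cap T_1$ and on $S_2\cap T_2$ can be joined at a common root, giving
\[
M(N,h)\ \ge\ M\bigl(|S_1\cap T_1|,\,h-1\bigr)+M\bigl(|S_2\cap T_2|,\,h-1\bigr),
\]
where each subproblem involves trees of height at most $h-1$ on the indicated leaf set. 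The crucial point is the interplay of the two parameters: recursing into a block that retains a $\tfrac12$-fraction of the leaves leaves the \emph{excess height} $h-\log_2 N$ unchanged, whereas recursing into a smaller block increases this excess, which can be spent later. This is exactly what penalises an adversary who concentrates the leaves into a single block in order to minimise the additive sum above.

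Then I would iterate the recursion and optimise. Unwinding $M$ down the tree while bookkeeping both the leaf mass distributed among the blocks and the accumulated excess height yields a recurrence whose solution is of the form $N^{\alpha}$; a clean diagonal-halving would give $\alpha=\tfrac12$, and the loss down to a concrete $\alpha$ comes from the adversarial, unbalanced distribution of leaves among the blocks. To push the exponent past the $\beta\sim0.149$ of Martin and Thatte, I expect it to be necessary to expand two levels of the tree at once (sixteen blocks rather than four) and to weight the resulting subproblems carefully, so that the trade-off between halving the leaf count and decrementing the height budget is exploited more tightly. The main obstacle is precisely this optimisation: proving that the worst-case leaf distribution among the blocks still forces the recurrence's solution above $N^{0.17}$ requires a delicate case analysis together with a well-chosen weighting, after which the final numerical bound $0.17>\tfrac16$ follows by evaluating the optimised parameters (note $0.17>0.1\overline{6}=\tfrac16$).
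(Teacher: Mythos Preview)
Your overall plan—recurse through the root splits while tracking leaf count together with a height budget—is the right genre, and is close in spirit to the paper's argument. But the specific parameterisation $M(N,h)$ with a \emph{single} height bound is where the approach runs into trouble, and the paper does something genuinely different here. The paper never passes to restrictions at all: it keeps both trees balanced throughout the recursion and introduces an \emph{overlap} parameter, defining $g(h_1,h_2,t)$ to be the minimum MAST over pairs of balanced trees of heights $h_1$ and $h_2$ whose label sets intersect in exactly $t$ leaves. It then proves by induction on $h_1+h_2$ that
\[
g(h_1,h_2,t)\ \ge\ 2^{\,0.22\log t\,-\,0.025(h_1+h_2)},
\]
and substituting $h_1=h_2=\log n$, $t=n$ gives $n^{0.17}$. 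The reason for carrying $h_1$ and $h_2$ separately is to allow the one-sided moves
\[
g(h_1,h_2,t)\ \ge\ g(h_1-1,h_2,t')\quad\text{and}\quad g(h_1,h_2,t)\ \ge\ g(h_1,h_2-1,t'),
\]
coming from the terms $\mathrm{mast}(\cS_L,\cT)$ and $\mathrm{mast}(\cS,\cT_L)$ in the MAST recursion. These are precisely what handle the case your diagonal-only inequality cannot: when almost all of the overlap sits in a single quadrant, your bound degenerates to $M(N,h)\ge M(N',h-1)+M(0,h-1)$ with $N'$ close to its maximum, yielding no multiplicative gain per level. In your framework the analogous one-sided move would read $\mathrm{mast}(\cS,\cT)\ge \mathrm{mast}(\cS_L,\cT|S_1)\ge M(|S_1|,h)$, which fails to decrement $h$ at all; the recursion stalls. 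Tracking the two heights (equivalently, their sum) independently is not a cosmetic choice—it is what makes the concentrated-mass cases go through.

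Two further points. First, the paper's case analysis is a \emph{one-level} split into five cases governed by a single numerical threshold $0.037$; it does not expand two levels into sixteen blocks. The improvement from Martin--Thatte's $\beta\approx 0.149$ to $0.17$ comes from the more flexible three-parameter recursion together with a sharper threshold, not from looking deeper in the tree. Second, your proposal stops exactly where the proof begins: you correctly flag that a ``delicate case analysis together with a well-chosen weighting'' is needed, but that analysis and the resulting numerical constants \emph{are} the entire content of the argument—nothing in the proposal as written establishes any concrete exponent. The natural repair is to replace $M(N,h)$ by a function with separate height bounds for the two trees (or, equivalently, to follow the paper and keep the trees balanced while tracking only the overlap $t$), after which a five-case analysis with an appropriately chosen threshold yields $0.17$.
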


This paper is organised as follows. Section~\ref{sec:notation} details the notation and definitions used throughout, while Section~\ref{sec:outline} outlines the counterexample construction for when $n=2048$. Section~\ref{sec:proofs} establishes the correctness of the construction and, more particularly, proves Theorem~\ref{mainresult} via a sequence of lemmas. The last section, Section~\ref{sec:lowerBounds} gives the proof of the improved lower bound on the size of a MAST (Theorem~\ref{thm:lowerbound}).

\section{Notation and Preliminaries}
\label{sec:notation}

In addition to the terminology given in the introduction, this section provides notation and terminology that is used in the remaining sections. Throughout the paper, $X$ denotes a non-empty finite set and all logarithms are base $2$.

\paragraph{Trees and subtrees.} Since all phylogenetic trees in this paper are rooted and binary, we will refer to such a tree as simply a {\it phylogenetic tree}. Let $\cT$ be a phylogenetic $X$-tree. We call $X$ the {\it label set} of $\cT$ and frequently denote it by $\cL(\cT)$. A  subtree of $\cT$ is {\it pendant} if it can be detached from $\cT$ by deleting a single edge. 
Observe that if $\cT$ is balanced, then the size of $\cT$ and each pendant subtree of $\cT$ is a power of two. A balanced phylogenetic tree on $n$ leaves has height $\log n$. As an example, Figure~\ref{fig:8caterpillars} shows such a tree on $8$ leaves.

Of course, a phylogenetic tree is simply a certain type of rooted tree whose leaves are labelled. Thus, if we omit the adjective ``phylogenetic'' and say, for example, a balanced tree, we mean a balanced rooted binary tree whose leaf set is unlabelled. This occurs, in particular, in some of the proofs where we describe processes that start with an (unlabelled) tree and assign labels to its leaves so that the resulting tree is a phylogenetic tree. 

\paragraph{Caterpillars.} Let $\cT$ be a phylogenetic $X$-tree with $n=|X|$. We call $\cT$ an {\it $n$-caterpillar} or, simply, a {\em caterpillar} if $n=1$, or $n\ge 2$ and we can order its leaf set $X$, say $l_1, l_2, \ldots, l_n$, so that $l_1$ and $l_2$ have the same parent and, for all $i\in \{2, 3, \ldots, n-1\}$, we have that $(p_{i+1}, p_i)$ is an edge in $\cT$, where $p_{i+1}$ and $p_i$ are the parents of $l_{i+1}$ and $l_i$, respectively. We denote such a caterpillar $\cT$ by $(l_1, l_2, \ldots, l_n)$ or, equivalently, $(l_2, l_1, l_3, \ldots, l_n)$, where $l_1$ and $l_2$ have been interchanged. For a  caterpillar $\cC=(l_1,l_2,\ldots,l_n)$ and a leaf $l'$ with $l'\notin\{l_1,l_2,\ldots,l_n\}$, the caterpillar $(l_1,l_2,\ldots,l_n,l')$ is denoted by $\cC||l'$. If $\cC=(l_1,l_2,\ldots,l_n)$ and $\cC'=(l_1',l_2',\ldots,l_{n}')$ are two caterpillars  on the same set of leaves such that $l_i=l'_{n-i+1}$ for all $i\in\{1,2,\ldots, n\}$, we say that $\cC$ and $\cC'$ are a pair of {\it anti-caterpillars}. For example, if $\cC=(l_1, l_2, \ldots, l_8)$ and $\cC'=(l_8, l_7, \ldots, l_1)$, then $\cC$ and $\cC'$ are a pair of anti-caterpillars. Lastly, a caterpillar on $3$ leaves, say $(a,b,c)$, is called a {\it triple} and denoted by $ab|c$ or, equivalently, $ba|c$.

\begin{figure}[t]
    \centering
    \includegraphics[width=0.45\textwidth]{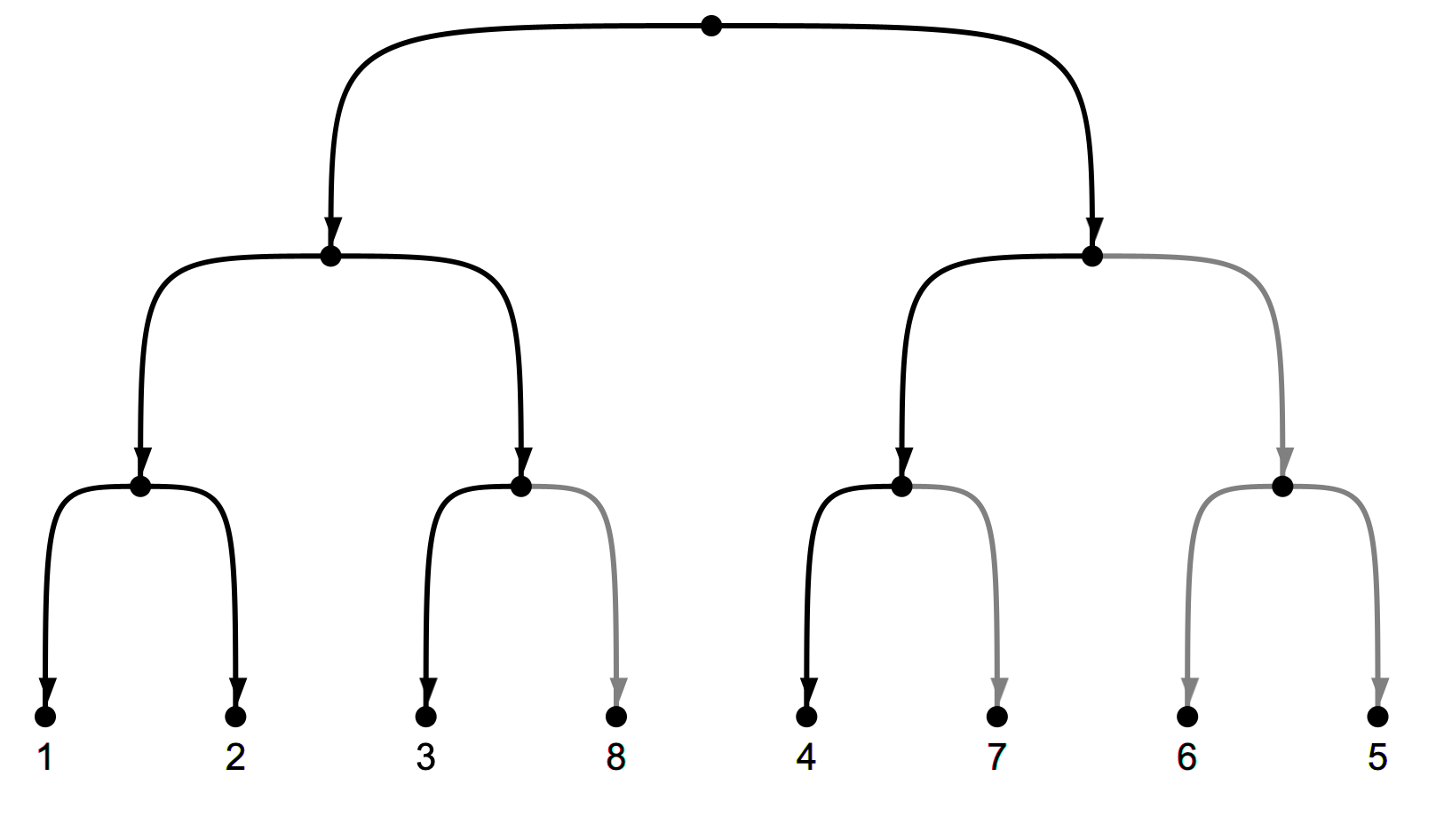}
    \caption{A balanced phylogenetic tree on $8$ leaves. The black edges delineate the embedding of the restricted subtree on leaves $\{1,2,3,4\}$ which is the caterpillar $(1,2,3,4)$. The restricted subtree on the remaining four leaves also form a caterpillar, namely $(5,6,7,8)$.}
    \label{fig:8caterpillars}
\end{figure}

\paragraph{Embedding trees.} Let $\cT$ be a phylogenetic $X$-tree, and let $\cT'$ be a phylogenetic $Y$-tree such that $Y\subseteq X$. We say that $\cT$ {\it embeds} $\cT'$ if $\cT|\cL(\cT')$ is isomorphic to $\cT'$, in which case we refer to the minimal rooted subtree of $\cT$ that connects all leaves in $Y$ as an {\it embedding} $\cE$ of $\cT'$ in $\cT$. To illustrate, the subtree formed by the black edges in the balanced phylogenetic tree in Figure~\ref{fig:8caterpillars} is an embedding of the caterpillar $(1, 2, 3, 4)$. Analogous to the label set of a phylogenetic tree, we use $\cL(\cE)$ to refer to the label set of $\cE$. More generally, let $\cP=\{\cS_1,\cS_2,\ldots,\cS_k\}$ be a set of phylogenetic trees such that, for each $i\in\{1,2,\ldots,k\}$, we have $\cL(\cS_i)\subseteq X$. Then the union $\cL(S_1)\cup \cL(\cS_2)\cup \cdots \cup \cL(S_k)$, denoted by $\cL(\cP)$, is the {\em label} set of $\cP$. If $\cT$ embeds each tree in $\cP$, we say $\cT$ {\it embeds} $\cP$, in which case, an {\em embedding}, say $\cE$, of $\cP$ in $\cT$ refers to an  embedding of each tree in $\cP$ in $\cT$ and the {\em label} set of $\cE$, denoted $\cL(\cE)$, is $\cL(\cP)$. Furthermore, if $\{\cL(\cS_1), \cL(\cS_2), \ldots, \cL(\cS_k)\}$ is a partition of $X$, then $\cT$ {\it perfectly embeds} $\cP$. Figure~\ref{fig:8caterpillars} shows a balanced phylogenetic tree on $8$~leaves that perfectly embeds the two $4$-caterpillars $(1,2,3,4)$ and $(5,6,7,8)$.

\paragraph{Maximum agreement subtrees.} Let $\cS$ and $\cT$ be two phylogenetic trees, and let $\cS_L, \cS_R$ and $\cT_L, \cT_R$ denote the two maximal pendant subtrees of $\cS$ and $\cT$, respectively. Then, $\textrm{mast}(\cS, \cT)$ can be computed recursively~\cite{steel1993kaikoura} as follows:
\begin{align} \label{eqn:mast}
    \textrm{mast}(\cS, \cT) = \max \big\{&\textrm{mast}(\cS_L,\cT_L) + \textrm{mast}(\cS_R, \cT_R), \,
    \textrm{mast}(\cS_L, \cT_R) + \textrm{mast}(\cS_R,\cT_L), \nonumber\\
    &\textrm{mast}(\cS,\cT_L), \, \textrm{mast}(\cS, \cT_R), \,
    \textrm{mast}(\cS_L, \cT), \, \textrm{mast}(\cS_R, \cT) \big\}.
\end{align}
Furthermore, a MAST for a pair of phylogenetic trees need not be unique. As an example, consider the two trees $\cS$ and $\cT$ on $16$ leaves depicted in Figure~\ref{fig:S16}. Any MAST of $\cS$ and $\cT$ has size $4$. Two such MASTs for $\cS$ and $\cT$ are $\cM_1=$\texttt{((6,7),(10,11))} and $\cM_2 =$\texttt{((5,6),(10,12))}.

\section{The Counterexample Construction}
\label{sec:outline}

{In this section, we give an explicit construction of a counterexample to Conjecture~\ref{conjecture}. Throughout the construction, we will refer to several lemmas and corollaries that are established in Section~\ref{sec:proofs}. We note that the results in Section~\ref{sec:proofs} are more general than what we need for the counterexample that is presented in this section. In particular, Theorem~\ref{t:construction} shows that, for any positive integer $k$, there exists a pair of balanced phylogenetic trees on $2^{2^{k+1}-k-2}$ leaves such that the size of a MAST for these two trees is $2^{2^k-k}$. Thus, for $k=3$, this theorem states that there exist two balanced trees $\cS$ and $\cT$ on $2^{2^{3+1}-3-2}=2^{11}=2048$ leaves that can be bijectively labelled with the elements in $\{1,2,\ldots,2048\}$ such that the size of a MAST for the resulting two balanced phylogenetic trees has size $2^{2^3-3}=32<2048^{\frac 1 2}$.}

\begin{figure}
    \centering
    \begin{minipage}{0.4\textwidth}
        \centering
        \includegraphics[width=0.5\textwidth]{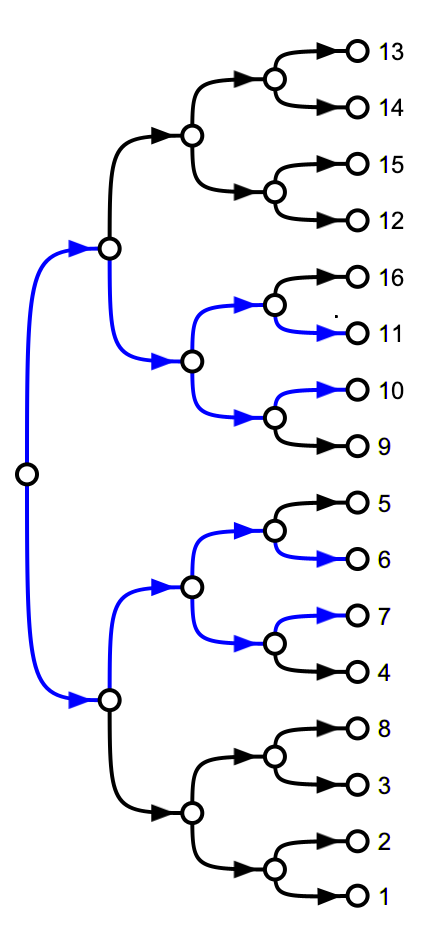}

        $\cS$
    \end{minipage}
    \begin{minipage}{0.4\textwidth}
        \centering
        \includegraphics[width=0.5\textwidth]{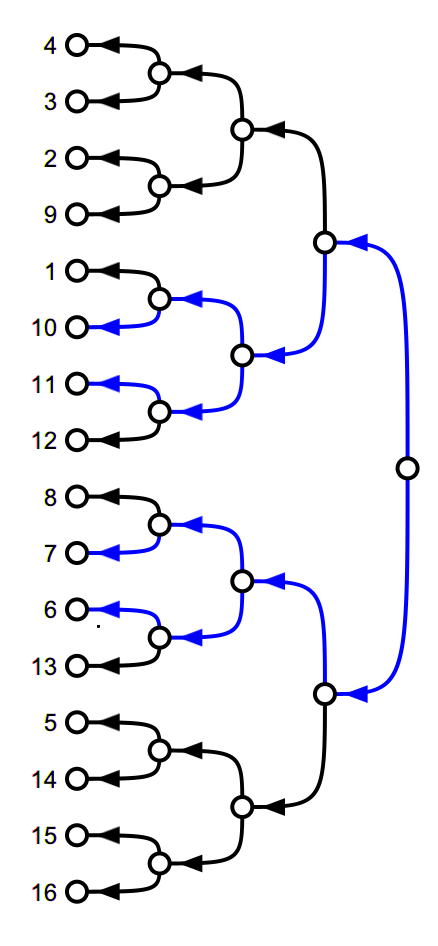}
        
        $\cT$
    \end{minipage}
    \caption{Two phylogenetic trees $\cS$ and $\cT$ on $16$ leaves, drawn with PhyloSketch~\cite{phyloSketch}.  An embedding of the MAST $\cM_1 =$\texttt{((6,7),(10,11))} for $\cS$ and $\cT$ is shown in blue. 
    }
    \label{fig:S16}    
    \label{fig:T16}
\end{figure}

\begin{figure}[ht!]
    \centering
    \begin{minipage}{0.45\textwidth}
        \centering
        \includegraphics[width=0.77\textwidth]{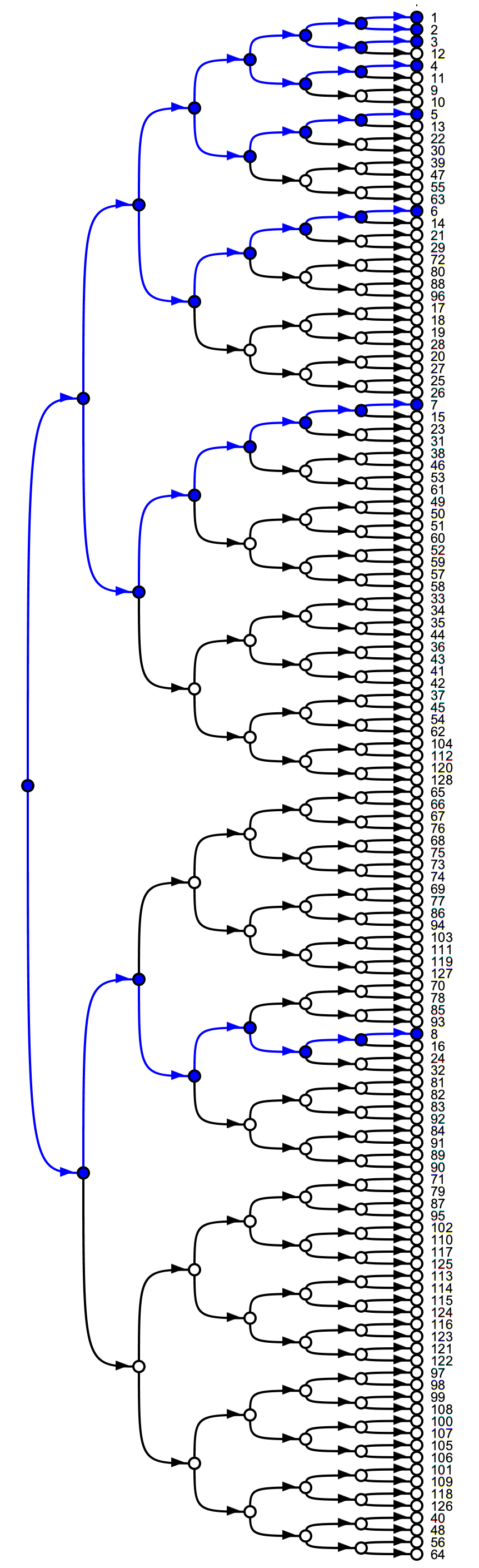}
    \end{minipage}\hfill
    \begin{minipage}{0.45\textwidth}
        \centering
        \includegraphics[width=0.77\textwidth]{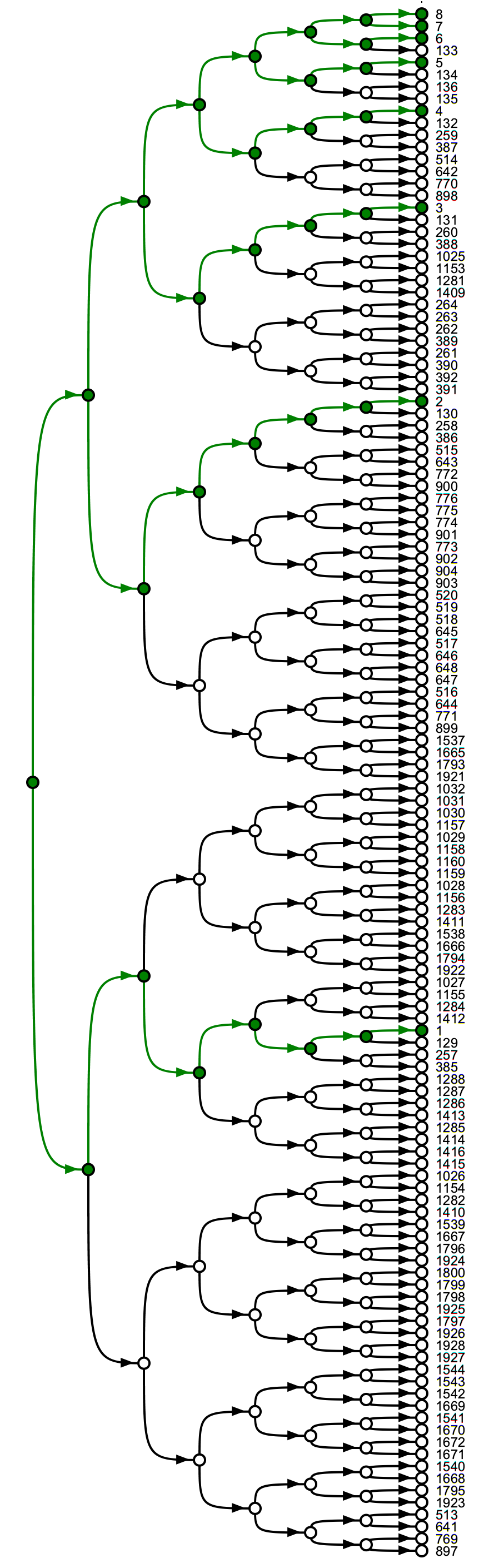}
    \end{minipage}
    \caption{{Subtrees $\cS_1$ (left) and $\cT_1$ (right) that are used to construct a counterexample for Conjecture~\ref{conjecture}. A pair of anti-caterpillars on leaves $\{1,2,\ldots,{8}\}$  is highlighted in blue and green in $\cS_1$ and $\cT_1$, respectively. Each of $\cS_1$ and $\cT_1$ perfectly embeds sixteen 8-caterpillars.}}
    \label{both-subtrees}
\end{figure}

{Now let $\cS$ and $\cT$ be two balanced trees on $2048=2^{4+7}$ leaves. In what follows, we  label the leaves of $\cS$ and $\cT$ such that $\textrm{mast}(\cS,\cT)=32$. Let $\cS_1,\cS_2,\ldots,\cS_{16}$  be the sixteen leaf-disjoint balanced pendant subtrees of $\cS$ that each have size $2^7$, and let $\cT_1,\cT_2,\ldots,\cT_{16}$  be the sixteen leaf-disjoint balanced pendant subtrees of $\cT$ that each have size $2^7$. By Lemma~\ref{label_distribution} and Corollary~\ref{c:caterpillars}, we can bijectively label the leaves of $\cS$ and $\cT$ with the elements in $\{1,2,\ldots,2048\}$ such that the following three properties are satisfied }
\begin{enumerate}[(i)]
\item {for all $i,j\in\{1,2,\ldots,16\}$, the pendant subtrees $\cS_i$ and $\cT_j$ have exactly $|\cL(\cS_i)\cap\cL(\cT_j)|=2^{7-4}=8$ labels in common,}
\item {each balanced pendant subtree in $\{\cS_1,\cS_2,\ldots,\cS_{16}\}$ and $\{\cT_1,\cT_2,\ldots,\cT_{16}\}$ perfectly embeds sixteen 8-caterpillars, and}
\item {the restrictions $\cS_i|(\cL(\cS_i)\cap \cL(\cT_j))$ and $\cT_j|(\cL(\cS_i)\cap \cL(\cT_j))$ are a pair of anti-caterpillars.}
\end{enumerate}
{As a MAST of a pair of anti-caterpillars has size 2 (see Lemma~\ref{lem:packingCaterpillars}), it follows that, if $\cS$ and $\cT$ have their leaves labelled such that (i), (ii), and (iii) hold, then $\textrm{mast}(\cS,\cT)=2\cdot 16=32<2048^{\frac 1 2}$ (see Lemma~\ref{claim3}), which disproves Conjecture~\ref{conjecture}. To illustrate the labelling of $\cS$ and $\cT$, the two subtrees $\cS_1$ and $\cT_1$ with their leaves labelled are shown in Figure~\ref{both-subtrees}. Note that $\cS_1$ and $\cT_1$ embed the pair $(1,2,3,4,5,6,7,8)$ and $(8,7,6,5,4,3,2,1)$ of anti-caterillars as indicated by the edges highlighted in blue and green, respectively.} The full trees in Newick format are given in Appendix~\ref{a:example2048}.

\section{Proof of Theorem~\ref{mainresult}}\label{sec:proofs}

The proof of Theorem~\ref{mainresult} is an amalgamation of Lemmas~\ref{lem:packingCaterpillars}, \ref{label_distribution}, and~\ref{claim3} together with Corollary~\ref{c:caterpillars}. 

\begin{lemma}
    \label{lem:packingCaterpillars}
    Let $\cC$ and $\cC'$ be a pair of anti-caterpillars on at least two leaves. Then {\em mast}$(\cC,\cC')=2$.
\end{lemma}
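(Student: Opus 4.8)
The plan is to establish the two inequalities $\textrm{mast}(\cC,\cC')\ge 2$ and $\textrm{mast}(\cC,\cC')\le 2$ separately. The lower bound is immediate: since $\cC$ and $\cC'$ have the same (at least two-element) leaf set, restricting each of them to any two-element subset $Y$ yields the unique phylogenetic tree on two leaves. These restrictions are therefore isomorphic, so $Y$ witnesses an agreement subtree of size $2$ and $\textrm{mast}(\cC,\cC')\ge 2$.

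For the upper bound I would argue via rooted triples. Write $\cC=(l_1,l_2,\ldots,l_n)$, so that the anti-caterpillar is $\cC'=(l_n,l_{n-1},\ldots,l_1)$. The key claim is that $\cC$ and $\cC'$ display \emph{no} common triple. To see this, fix indices $i<j<k$. In the caterpillar $\cC$ the leaves are attached in increasing index order as one moves towards the root, so $l_k$ branches off above $l_i$ and $l_j$; hence restricting $\cC$ to $\{l_i,l_j,l_k\}$ gives the triple $l_il_j\,|\,l_k$. In $\cC'$ the order of these three leaves is reversed, with $l_k$ now the deepest and $l_i$ the highest, so by the same reasoning $\cC'$ restricted to $\{l_i,l_j,l_k\}$ gives $l_jl_k\,|\,l_i$. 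Since the leaf separated from the other two is $l_k$ in the first triple and $l_i$ in the second, and $i\ne k$, these two triples are distinct.

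With the claim in hand, suppose for contradiction that $Y$ is a subset of the common leaf set with $\cC|Y=\cC'|Y$ and $|Y|\ge 3$. Pick any three leaves $l_i,l_j,l_k\in Y$ with $i<j<k$. Restricting the isomorphic trees $\cC|Y$ and $\cC'|Y$ further to $\{l_i,l_j,l_k\}$ yields isomorphic triples; but these coincide with the triples displayed by $\cC$ and $\cC'$ on $\{l_i,l_j,l_k\}$, namely $l_il_j\,|\,l_k$ and $l_jl_k\,|\,l_i$, which are distinct by the claim. This contradiction shows that every agreement subtree of $\cC$ and $\cC'$ has at most two leaves, so $\textrm{mast}(\cC,\cC')\le 2$. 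Combining the two bounds gives $\textrm{mast}(\cC,\cC')=2$.

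I expect the only real care-point to be the bookkeeping of the triple computation: correctly reading off which leaf is the ``outgroup'' in a caterpillar from its ordering, and verifying that reversing the leaf order genuinely swaps this outgroup from $l_k$ to $l_i$. Once the ``no common triple'' claim is secured, the remainder is a routine application of the fact that restricting two isomorphic phylogenetic trees to a common subset of their leaves preserves the isomorphism.
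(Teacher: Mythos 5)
Your proof is correct and follows essentially the same approach as the paper: both establish the trivial lower bound and then show that a pair of anti-caterpillars displays no common rooted triple, which forces $\textrm{mast}(\cC,\cC')<3$. Your version merely makes the bookkeeping explicit (identifying the triples $l_il_j|l_k$ and $l_jl_k|l_i$ and spelling out why an agreement subtree on three or more leaves would yield a common triple), where the paper states these facts more tersely.
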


\begin{proof}
Evidently, $\textrm{mast}(\cC,\cC')\geq 2$.  Let $ab|c$ be a triple that is embedded in $\cC$. Since $\cC$ and $\cC'$ are anti-caterpillars, it follows that either $ac|b$ or $bc|a$ is a triple embedded in $\cC'$, in particular, $ab|c$ is not embedded in $\cC'$. Hence  $\textrm{mast}(\cC,\cC')<3$ and, therefore, $\textrm{mast}(\cC,\cC')=2$.
\end{proof}

\begin{lemma}\label{label_distribution}
Let $h_2\ge h_1$ be non-negative integers. Let $\cS$ (resp.\ $\cT$) be a balanced tree on $2^{h_1+h_2}$ leaves consisting of $2^{h_1}$ balanced pendant subtrees $\cS_1, \cS_2, \ldots, \cS_{2^{h_1}}$ (resp.\ $\cT_1, \cT_2, \ldots, \cT_{2^{h_1}}$) each of size $2^{h_2}$. Then we can bijectively label the leaves of $\cS$ and $\cT$ with the elements in $\left\{1, 2, \ldots, 2^{h_1+h_2}\right\}$ so that, for all $i, j\in \{1, 2, \ldots, 2^{h_1}\}$, the pendant subtrees $\cS_i$ and $\cT_j$ have exactly $2^{h_2-h_1}$ common labels.
\end{lemma}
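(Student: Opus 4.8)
The plan is to reduce the statement to a pure counting problem and then give an explicit construction. First I would observe that any bijective labelling of $\cS$ and $\cT$ by $\{1, 2, \ldots, 2^{h_1+h_2}\}$ assigns to each label $\ell$ a unique pair $(i,j)$, namely the indices of the unique pendant subtrees $\cS_i$ and $\cT_j$ that contain leaves labelled $\ell$ in $\cS$ and $\cT$, respectively. Under this correspondence, the number of labels common to $\cS_i$ and $\cT_j$ is exactly the number of labels assigned to the pair $(i,j)$. Hence it suffices to produce a labelling for which precisely $2^{h_2-h_1}$ labels are assigned to each of the $2^{2h_1}$ pairs $(i,j)$; note that $2^{h_2-h_1}$ is a positive integer precisely because $h_2 \ge h_1$.

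Next I would construct such a labelling directly. I would partition the label set $\{1, 2, \ldots, 2^{h_1+h_2}\}$ into $2^{2h_1}$ blocks $B_{ij}$, one for each pair $(i,j)$ with $i, j \in \{1, \ldots, 2^{h_1}\}$, each block of size $2^{h_2-h_1}$; this is possible since $2^{2h_1} \cdot 2^{h_2-h_1} = 2^{h_1+h_2}$. For a fixed $i$, the union $\bigcup_j B_{ij}$ has cardinality $2^{h_1} \cdot 2^{h_2-h_1} = 2^{h_2}$, which matches the number of leaves of $\cS_i$; so I can choose any bijection between this union and the leaf set of $\cS_i$, and carrying this out for every $i$ defines a bijective labelling of $\cS$. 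Symmetrically, for fixed $j$ the union $\bigcup_i B_{ij}$ again has size $2^{h_2}$, so an analogous choice of bijections over all $j$ defines a bijective labelling of $\cT$.

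Finally I would verify the conclusion: with these labellings, a label $\ell$ lies in $\cS_i$ if and only if $\ell \in \bigcup_j B_{ij}$, and in $\cT_j$ if and only if $\ell \in \bigcup_i B_{ij}$, so the labels common to $\cS_i$ and $\cT_j$ are exactly those in $B_{ij}$, of which there are $2^{h_2-h_1}$, as required.

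I expect there to be no genuine obstacle: the tree structure plays no role beyond guaranteeing that each pendant subtree has exactly $2^{h_2}$ leaves, and the real content is merely the consistency of a uniform $2^{h_1}\times 2^{h_1}$ intersection matrix with the prescribed row and column sums $2^{h_2}$. The only points needing care are invoking the hypothesis $h_2 \ge h_1$ (so that the common-label count is a nonnegative power of two) and checking that the block sizes sum correctly to fill every subtree, both of which the counting above confirms.
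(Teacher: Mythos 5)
Your proposal is correct and takes essentially the same approach as the paper: both partition $\{1,2,\ldots,2^{h_1+h_2}\}$ into $2^{2h_1}$ blocks of size $2^{h_2-h_1}$ indexed by pairs $(i,j)$ (the paper calls them $L_r$ and arranges them in a $2^{h_1}\times 2^{h_1}$ table), assign to $\cS_i$ the union of row $i$ and to $\cT_j$ the union of column $j$, and observe that $\cL(\cS_i)\cap\cL(\cT_j)$ is exactly the block in cell $(i,j)$. The only difference is notational---your blocks are indexed by pairs rather than linearly---so there is nothing further to compare.
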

 
\begin{proof}
For each $r\in\{1, 2, \ldots, 2^{2h_1}\}$, let
$$L_r = \{1+(r-1)\cdot 2^{h_2-h_1},\, 2+(r-1)\cdot 2^{h_2-h_1},\, \ldots,\, 2^{h_2-h_1}+(r-1)\cdot 2^{h_2-h_1}\}.$$
That is, $L_1 = \{1, 2, \ldots, 2^{h_2-h_1}\}$, $L_2 = \{1+2^{h_2-h_1}, 2+2^{h_2-h_1}, \ldots, 2^{h_2-h_1}+ 2^{h_2-h_1}\}$, and so forth. Note that $|L_r|=2^{h_2-h_1}$ for all $r$. Moreover, $L_i \cap L_j = \emptyset$ for all $i\neq j$, and
$$\bigcup_{r=1}^{2^{2h_1}} L_r= \left\{1,2, \ldots, 2^{h_1+h_2}\right\}.$$

Using the sets $L_r$, we now assign labels to the pendant subtrees $\cS_1, \cS_2, \ldots, \cS_{2^{h_1}}$ of $\cS$ and the pendant subtrees $\cT_1, \cT_2, \ldots, \cT_{2^{h_1}}$ of $\cT$ as follows. For all $i$, the subtree $\cS_i$ is assigned the union of label sets in row $i$ and, for all $j$, the subtree $\cT_j$ is assigned the union of label sets in column $j$:

\begin{center}
\begin{tabular}{c|cccccc}
& $\cT_1$ & $\cT_2$ & $\ldots$ & $\cT_j$ & $\ldots$ & $\cT_{2^{h_1}}$ \\
\hline 
$\cS_1$ & $L_1$ & $L_2$ & $\ldots$ & $L_j$ & $\ldots$ & $L_{2^{h_1}}$ \\  
$\cS_2$ & $L_{1+2^{h_1}}$ & $L_{2+2^{h_1}}$ & $\ldots$ & $L_{j+2^{h_1}}$ & $\ldots$ &  $L_{2^{h_1}+2^{h_1}}$ \\ 
$\vdots$ & $\vdots$ & $\vdots$ & $\ldots$ & $\vdots$ & $\ldots$ & $\vdots$ \\ 
$\cS_i$ & $L_{1+(i-1)\cdot 2^{h_1}}$ & $L_{2+(i-1)\cdot 2^{h_1}}$ & $\ldots$ & $L_{j+(i-1) \cdot 2^{h_1}}$ & $\ldots$ & $L_{2^{h_1}+(i-1)\cdot 2^{h_1}}$ \\
$\vdots$ & $\vdots$ & $\vdots$ & $\ldots$ & $\vdots$ \\ 
$\cS_{2^{h_1}}$ & $L_{1+ (2^{h_1}-1) \cdot 2^{h_1}}$ & $L_{2+ (2^{h_1}-1) \cdot 2^{h_1}}$  & $\ldots$ & $L_{j+(2^{h_1}-1) \cdot 2^{h_1}}$ & $\ldots$ & $L_{2^{h_1}+ (2^{h_1}-1) \cdot 2^{h_1}}=L_{2^{2h_1}}$  
\end{tabular} 
\end{center}

\noindent Since $|L_r|=2^{h_2-h_1}$, and $L_i\cap L_j=\emptyset$ for all $i\neq j$, it follows that each subtree $\cS_i$ and $\cT_j$ is assigned $2^{h_1} \cdot 2^{h_2-h_1} = 2^{h_2}$ distinct labels. Moreover, under this assignment of labels, $\cL(\cS_i)\cap \cL(\cT_j)=L_r$ for some $r\in \{1, 2, \ldots, 2^{2h_1}\}$ (namely, the one that is placed in row $i$ and column $j$). In particular,  for all $i$ and $j$, the pendant subtrees $\cS_i$ and $\cT_j$ have exactly $|L_{r}|=2^{h_2-h_1}$ common labels.
\end{proof}

Consider the (infinite) sequence A054243 of integers from {\it The On-Line Encyclopedia of Integer Sequences}~\cite{oeis}. The first eighteen integers of this sequence are:
\begin{center}
\begin{tabular}{ c| c c c c c c c c c c c c c c c c c c }
$n$ &1&2&3&4&5&6&7&8&9&10&11&12&13&14&15&16&17&18 \\\hline 
$a(n)$ &1& 1& 1& 2& 2& 4& 8& 16& 16& 32& 64& 128& 256& 512& 1024& 2048& 2048& 4096
\end{tabular}
\end{center}
and its closed form is $a(n) = 2^{\lfloor n - \log n - 1\rfloor}$. Intuitively, the sequence is formed by successive powers of two, but it ``stutters'' if $n$ is a power of two, that is, if $n$ is a power of two, then $a(n)=a(n+1)$.

\begin{lemma}
\label{l:stutter-seq}
Let $n$ be a positive integer. Then a balanced phylogenetic tree on $2^{n-1}$ leaves embeds
$$2^{\lfloor n-\log n-1\rfloor}$$
label-disjoint $n$-caterpillars.
\end{lemma}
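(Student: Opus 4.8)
The plan is to argue by induction on $n$, packing caterpillars recursively through the root split of the tree. Write $f(n)=2^{\lfloor n-\log n-1\rfloor}$ for the target number, and let $\cT$ be a balanced tree of height $n-1$ (so $|\cL(\cT)|=2^{n-1}$). The geometric starting point is the observation that an $n$-caterpillar can only be embedded in $\cT$ by using its full height: its spine has $n-1$ internal vertices whose images in $\cT$ lie on a root-to-leaf path at strictly increasing depths, so these must occupy levels $0,1,\dots,n-2$ and the bottom cherry must sit at depth $n-1$. Consequently, if we split $\cT$ at the root into its two maximal pendant subtrees $\cT_L$ and $\cT_R$, each balanced of height $n-2$, then every embedded $n$-caterpillar consists of an $(n-1)$-caterpillar embedded in one of $\cT_L,\cT_R$ together with a single additional leaf (the top ``tooth'' $l_n$) taken from the other side. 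This is exactly the mechanism I will exploit: an $(n-1)$-caterpillar in $\cT_L$ becomes an $n$-caterpillar once we hang any leaf of $\cT_R$ at the root, and symmetrically.

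For the inductive step (with $n\ge 4$, where $f(n)$ is even), I would apply the induction hypothesis to each of $\cT_L$ and $\cT_R$, obtaining $f(n-1)$ label-disjoint $(n-1)$-caterpillars in each. I then select $a:=f(n)/2$ of them in $\cT_L$ and $b:=f(n)/2$ of them in $\cT_R$; the bodies in $\cT_L$ occupy $a(n-1)$ leaves, leaving the remaining leaves of $\cT_L$ free, and likewise in $\cT_R$. Each of the $a$ bodies in $\cT_L$ is extended to an $n$-caterpillar by attaching a distinct free leaf of $\cT_R$ as its top tooth, and each of the $b$ bodies in $\cT_R$ by attaching a distinct free leaf of $\cT_L$; since the spine-top of each caterpillar is the root of $\cT$, the restriction to the resulting $n$ labels is indeed the desired $n$-caterpillar. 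This produces $a+b=f(n)$ pairwise label-disjoint $n$-caterpillars, provided the selection and the supply of free leaves are consistent.

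The crux --- and the step I expect to be the main obstacle --- is verifying that these two demands can always be met, i.e.\ the arithmetic of the floor exponent. Two inequalities are needed: $f(n)/2\le f(n-1)$, so that enough bodies exist in each half; and $n\cdot f(n)/2\le 2^{n-2}$, so that after placing $a$ bodies in a half there remain at least $a$ free leaves to serve as teeth for the opposite half. The second is immediate from $\lfloor n-\log n-1\rfloor\le n-\log n-1$, which gives $f(n)\le 2^{n-1}/n$. The first reduces, after writing everything as a power of two, to $\lfloor n-\log n-2\rfloor\le\lfloor n-\log(n-1)-2\rfloor$, which follows from monotonicity of $\log$. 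Both inequalities become equalities precisely when $n$ is a power of two --- this is the source of the ``stutter'' in A054243 and of the perfect packings (e.g.\ sixteen $8$-caterpillars filling all $2^7$ leaves) --- so $f(n)$ is exactly the largest power of two that the leaf budget $2^{n-1}/n$ permits, and the delicate part is confirming reachability at these tight boundaries.

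Finally, the base cases $n\in\{1,2,3\}$ have $f(n)=1$ and only require embedding a single $n$-caterpillar, which is immediate ($n=1$: a single leaf; $n=2$: the cherry $\cT$ itself; $n=3$: take both leaves of one pendant cherry together with a leaf of the other, yielding a triple). The remaining routine matters are checking label-disjointness throughout and confirming that every chosen free leaf genuinely induces the top tooth after restriction; both are straightforward, since any leaf of the opposite subtree attaches directly to the root of $\cT$ once the degree-two vertices are suppressed.
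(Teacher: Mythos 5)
Your proof is correct and takes essentially the same route as the paper: induction on $n$, splitting the balanced tree at the root into its two maximal pendant subtrees, and extending $(n-1)$-caterpillar bodies embedded in each half by a spare leaf drawn from the other half. The only difference is bookkeeping: the paper verifies the counting by a two-case analysis according to whether $n-1$ is a power of two (so that $a(n)=a(n-1)$ or $a(n)=2a(n-1)$), whereas you take $f(n)/2$ bodies per side uniformly and check the two inequalities $f(n)/2\le f(n-1)$ (floor monotonicity) and $n\cdot f(n)/2\le 2^{n-2}$ (from $f(n)\le 2^{\,n-\log n-1}=2^{n-1}/n$), a mild but genuine streamlining --- though your side remark about tightness is slightly off (the supply inequality $f(n)/2\le f(n-1)$ is strict exactly when $n-1$, not $n$, is a power of two), which is immaterial since the argument only uses the inequalities.
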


\begin{proof}
The proof is by induction on $n$. Let $\cT$ be a balanced phylogenetic  tree on $2^{n-1}$ leaves. Evidently, if $n=1$, then $\cT$ embeds one $1$-caterpillar, and so the lemma holds. Now suppose that $n\geq 2$, and that the lemma holds for all positive integers at most $n-1$. Let $\cT_1$ and $\cT_2$ denote the two maximal pendant subtrees of $\cT$. For each $i\in \{1, 2\}$, $\cT_i$ is a balanced phylogenetic tree on $2^{n-2}$ leaves and so, by the induction assumption, $\cT_i$ embeds
$$a(n-1)=2^{\lfloor (n-1)-\log (n-1)-1\rfloor}$$
label-disjoint $(n-1)$-caterpillars. The remainder of the proof splits into two cases depending on whether or not $n-1$ is a power of two.

For the first case, assume that $n-1$ is a power of two. Then $a(n)=a(n-1)$, and so we need to show that $\cT$ embeds $a(n-1)$ label-disjoint $n$-caterpillars. If $\cC_{n-1}$ is an $(n-1)$-caterpillar embedded in $\cT_i$, then $\cC_{n-1}||l$ is an $n$-caterpillar embedded in $\cT$, where $l\in \cT_j$ and $\{i, j\}=\{1, 2\}$. Thus, we can extend an embedding $\mathcal E_1$ of $\frac{1}{2}a(n-1)$ label-disjoint $(n-1)$-caterpillars in $\cT_1$ and an embedding $\mathcal E_2$ of $\frac{1}{2}a(n-1)$ label-disjoint $(n-1)$-caterpillars in $\cT_2$ to an embedding of $a(n-1)$ label-disjoint $n$-caterpillars in $\cT$ provided, for each $i\in \{1, 2\}$, we have 
$$2^{n-2}-|\cL(\mathcal E_i)|\geq \textstyle{\frac{1}{2}}a(n-1).$$
That is,
\begin{equation}\label{eq:one}
\textstyle{\frac{1}{2}}\cdot 2^{\lfloor (n-1)-\log (n-1)-1\rfloor}\cdot (n-1)+ \textstyle{\frac{1}{2}}\cdot 2^{\lfloor (n-1)-\log (n-1)-1\rfloor}\leq 2^{n-2}.
\end{equation}
Since $n-1$ is a power of two and $n=2^{\log n}$, the LHS of (\ref{eq:one}) is
\begin{align*}
\textstyle{\frac{1}{2}}\cdot 2^{\lfloor (n-1)-\log (n-1)-1\rfloor}\cdot ((n-1)+1)
& = 2^{-1}\cdot 2^{(n-1)-\log (n-1)-1}\cdot 2^{\log n} \\
& = 2^{n-3-\log (n-1)+\log n}.
\end{align*}
As $-\log (n-1)+\log n\le 1$, we have
$$2^{n-3-\log (n-1)+\log n}\leq 2^{n-3+1}=2^{n-2},$$
thereby establishing (\ref{eq:one}).

For the second case, assume that $n-1$ is not a power of two. Then $a(n)=2a(n-1)$, and so we need to show that $\cT$ embeds $2a(n-1)$ label-disjoint $n$-caterpillars. Analogous to the first case, we can extend an embedding $\mathcal E_1$ of $a(n-1)$ label-disjoint $(n-1)$-caterpillars in $\cT_1$ and an embedding $\mathcal E_2$ of $a(n-1)$ label-disjoint $(n-1)$-caterpillars in $\cT_2$ to an embedding of $2a(n-1)$ label-disjoint $n$-caterpillars in $\cT$ provided, for each $i\in \{1, 2\}$, we have
$$2^{n-2}-|\cL(\mathcal E_i)|\ge a(n-1).$$
That is,
\begin{equation}\label{eq:two}
2^{\lfloor (n-1)-\log (n-1)-1\rfloor}\cdot (n-1) + 2^{\lfloor (n-1)-\log (n-1)-1\rfloor}\le 2^{n-2}.
\end{equation}
Since $n=2^{\log n}$, the LHS of (\ref{eq:two}) is
\begin{align*}
2^{\lfloor (n-1)-\log (n-1)-1\rfloor}\cdot ((n-1)+1)
& = 2^{\lfloor (n-1)-\log (n-1)-1\rfloor}\cdot 2^{\log n} \\
& = 2^{n-2+\lfloor -\log(n-1)\rfloor+\log n}.
\end{align*}
As $n-1$ is not a power of two, it follows that $\lfloor -\log(n-1)\rfloor+\log n\le 0$, and so
$$2^{n-2+\lfloor -\log(n-1)\rfloor+\log n}\le 2^{n-2}.$$
This establishes (\ref{eq:two}), and completes the proof of the lemma.
\end{proof}

\begin{corollary}
\label{c:caterpillars}
Let $k$ be a positive integer. Then a balanced phylogenetic tree $\cT$ of height ${2^k-1}$ perfectly embeds $2^{2^k-k-1}$ caterpillars each of size $2^k$.
\end{corollary}

\begin{proof}
Taking $n=2^k$ in Lemma~\ref{l:stutter-seq}, it follows that $\cT$ embeds
$$2^{\left\lfloor 2^k-\log \left(2^k\right)-1\right\rfloor}=2^{\left\lfloor 2^k-k-1\right\rfloor}=2^{2^k-k-1}$$
label-disjoint $2^k$-caterpillars. Collectively, these caterpillars have
$$2^{2^k-k-1}\cdot 2^{k}=2^{2^k-1}$$
labels and, since the caterpillars are label-disjoint and $\cT$ has $2^{2^k-1}$ leaves, it follows that this embedding is perfect.
\end{proof}
{To illustrate Corollary~\ref{c:caterpillars} for {$k=2$}, a balanced phylogenetic tree of height 3 perfectly embeds two label-disjoint 4-caterpillars. For an example, see Figure~\ref{fig:8caterpillars}, where the two perfectly embedded caterpillars are $(1,2,3,4)$ and $(5,6,7,8)$. Moreover, for {$k=3$}, a balanced phylogenetic tree of height 7 perfectly embeds sixteen label-disjoint 8-caterpillars. Two such trees are shown in Figure~\ref{both-subtrees}, where the pair of anti-caterpillars $(1,2,3,4,5,6,7,8)$ and $(8,7,6,5,4,3,2,1)$ is highlighted in blue and green, respectively.}

\begin{lemma}\label{claim3}
{Let $k,p,q$ and $r$ be positive integers.} Let $\cS$ and $\cT$ be balanced phylogenetic trees on $p\cdot 2^k$ and $q\cdot 2^k$ leaves, respectively. Suppose that $\cS$ consists of $p$ pendant subtrees $\cS_1,\cS_2,\ldots,\cS_{p}$ each of size $2^{k}$ and $\cT$ consists of $q$ pendant subtrees $\cT_1,\cT_2,\ldots,\cT_{q}$ each of size $2^{k}$. If
\begin{enumerate}[{\rm (i)}]
\item for all $i, j$, we have $|\cL(\cS_i)\cap\cL(\cT_j)|=r$ and
\item for all $i, j$, the restrictions $\cS_i|(\cL(\cS_i)\cap \cL(\cT_j))$ and $\cT_j|(\cL(\cS_i)\cap \cL(\cT_j))$ is a pair of anti-caterpillars,
\end{enumerate}  
then, $\textup{mast}(\cS,\cT)\le 2\cdot \max\{p,q\}$.
\end{lemma}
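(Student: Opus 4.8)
The plan is to prove the bound by strong induction on $p+q$ (equivalently, on the total number of leaves), driving the induction with the recursive formula~(\ref{eqn:mast}) for the MAST. By the symmetry of $\textrm{mast}$ I may assume $p\ge q$, so that $\max\{p,q\}=p$, and since each of $\cS$ and $\cT$ has at least $2^k\ge 2$ leaves the recursion always applies. I would take as the base cases of the induction all instances with $\min\{p,q\}=1$ (these include arbitrarily large $p$, so they must be handled directly rather than by further recursion, since splitting a single block destroys the stated structure).

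For a base case, say $q=1$, the tree $\cT$ is a single block $\cT_1$, and a direct counting argument suffices. Given any agreement subtree with label set $Y$, I partition $Y$ into the sets $Y_i=Y\cap\cL(\cS_i)$. Each $Y_i$ lies in the common label set of $\cS_i$ and $\cT_1$, so the restriction $\cS_i|Y_i\cong\cT_1|Y_i$ is an agreement subtree of the pair of anti-caterpillars guaranteed by~(ii); by Lemma~\ref{lem:packingCaterpillars} this forces $|Y_i|\le 2$. Summing over the $p$ blocks gives $|Y|\le 2p=2\max\{p,q\}$, and the fully degenerate case $p=q=1$ is the same computation.

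For the inductive step, with both $p,q\ge 2$, I let $\cS_L,\cS_R$ and $\cT_L,\cT_R$ be the maximal pendant subtrees of $\cS$ and $\cT$. The key structural observation is that, because $p$ and $q$ are forced to be powers of two by balancedness, this top-level split respects the block decomposition: $\cS_L$ and $\cS_R$ each consist of $p/2$ of the original size-$2^k$ blocks, and likewise $\cT_L,\cT_R$ each consist of $q/2$ blocks. Consequently every pair appearing in~(\ref{eqn:mast}), namely $(\cS_L,\cT_L),(\cS_R,\cT_R),(\cS_L,\cT_R),(\cS_R,\cT_L),(\cS,\cT_L),(\cS,\cT_R),(\cS_L,\cT),(\cS_R,\cT)$, is again an instance of the lemma (hypotheses~(i) and~(ii) are inherited, since restrictions of anti-caterpillars are anti-caterpillars) with strictly smaller parameter sum. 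Applying the induction hypothesis to each term and using $\max\{p/2,q/2\}=\tfrac12\max\{p,q\}$, $\max\{p,q/2\}\le\max\{p,q\}$, and $\max\{p/2,q\}\le\max\{p,q\}$, I find that each of the six quantities in~(\ref{eqn:mast}) is at most $2\max\{p,q\}$; taking the maximum yields the claim.

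I expect the main obstacle to be conceptual rather than computational. The naive bound from counting alone, namely that each of the $pq$ cells $\cL(\cS_i)\cap\cL(\cT_j)$ contributes at most two leaves, only gives $\textrm{mast}(\cS,\cT)\le 2pq$, which is far too weak. The improvement to $2\max\{p,q\}$ has to come from the recursion, whose optimum is realized by splitting $\cS$ and $\cT$ in tandem so that their block structures stay aligned. The terms to watch are the crossing ones, $(\cS,\cT_L)$ and $(\cS_L,\cT)$, where only a single tree is subdivided; the whole point is that halving one side cannot increase $\max\{p,q\}$, so these terms stay within budget. A minor technical point worth recording is that the premises force $p$ and $q$ to be powers of two, which is exactly what legitimizes the clean halving used in the inductive step.
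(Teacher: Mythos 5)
Your proof is correct, and its skeleton matches the paper's: strong induction on $p+q$; a base case $\min\{p,q\}=1$ handled by partitioning the label set of an agreement subtree over the blocks and applying Lemma~\ref{lem:packingCaterpillars} cell by cell; and an inductive step exploiting that, for $p,q\ge 2$, the top-level split of each balanced tree respects the block structure, so every sub-instance inherits (i) and (ii) with strictly smaller parameter sum. The one substantive difference is how the recursive inequality is obtained. You cite Equation~\eqref{eqn:mast} wholesale and bound each of its six terms by the induction hypothesis. The paper never invokes Equation~\eqref{eqn:mast} in this proof; instead it re-derives, by a self-contained triples argument, the statement that any MAST $\cM$ of $\cS$ and $\cT$ either uses no leaf of one maximal pendant subtree of $\cS$, or no leaf of one maximal pendant subtree of $\cT$, or splits into two parts matched with $(\cS_L,\cT_L),(\cS_R,\cT_R)$ or with $(\cS_L,\cT_R),(\cS_R,\cT_L)$ --- which is exactly the upper-bound direction of Equation~\eqref{eqn:mast} specialised to this setting, and it leads to the same three-term recursion and the same closing computation as yours. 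Your shortcut is legitimate, since the paper records Equation~\eqref{eqn:mast} as known and that recursion is valid for trees on different label sets (as needed here, where $|\cL(\cS)|=p\cdot 2^k$ and $|\cL(\cT)|=q\cdot 2^k$ generally differ); what the paper's longer derivation buys is self-containedness of the lemma. One small remark: your appeal to ``restrictions of anti-caterpillars are anti-caterpillars'' is true but unnecessary --- passing to $\cS_L,\cS_R,\cT_L,\cT_R$ leaves the blocks $\cS_i,\cT_j$ and their pairwise intersections untouched, so hypotheses (i) and (ii) are inherited verbatim.
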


\begin{proof}
Let $f(p,q)$ {denote} the maximum size of a MAST for $\cS$ and $\cT$. We will show by induction on $p+q$ that $f(p,q)\le 2\cdot\max\{p,q\}$. Note that, since $\cS$ and $\cT$ are balanced, $p$ and $q$ are powers of two. Furthermore, by symmetry, $f(p,q)=f(q,p)$. 

Without loss of generality, we may assume $p\leq q$. If $p=1$, then, for all positive integers $q$, we have $f(1,q) = 2q$. To see this, for all $j$, the restricted subtrees $\cS_1|(\cL(\cS_1)\cap \cL(\cT_j))$ and $\cT_j|(\cL(\cS_1)\cap \cL(\cT_j))$ induce a pair of anti-caterpillars, and so, by Lemma~\ref{lem:packingCaterpillars}, each pendant subtree $\cT_j$ of $\cT$ contributes at most $2$ leaves to a MAST between $\cS$ and $\cT$. It follows that the lemma holds for when $p=1$ and $q$ is a positive integer.

Now assume that $p\ge 2$ and the lemma holds for all smaller values of $p+q$. Let $\cS_L, \cS_R$ and $\cT_L, \cT_R$ be the maximal pendant subtrees of $\cS$ and $\cT$, respectively. Let $\cM$ be a MAST of $\cS$ and $\cT$. If there exist $x, y\in \cL(\cM)$ such that $x\in \cL(\cS_L)\cap \cL(\cT_L)$ and $y\in \cL(\cS_L)\cap \cL(\cT_R)$, then $\cL(\cM)\cap \cL(\cS_R)$ is empty. Otherwise, there is a triple $xy|z$ embedded in $\cM$, where $z\in \cL(\cS_R)$, but $xy|z$ is not embedded in $\cT$, a contradiction. Intuitively, if $\cM$ connects one half of $\cS$ to both halves of $\cT$, then the other half of $\cS$ is not used in $\cM$. Using symmetric arguments, we obtain the following:
\begin{enumerate}[(A)]
\item if $\cL(\cM)\cap \cL(\cS_L)\cap \cL(\cT_L)$ and $\cL(\cM)\cap \cL(\cS_L)\cap \cL(\cT_R)$ are non-empty, then $\cL(\cM)\cap \cL(\cS_R)=\emptyset$;

\item if $\cL(\cM)\cap \cL(\cS_R)\cap \cL(\cT_L)$ and $\cL(\cM)\cap \cL(\cS_R)\cap \cL(\cT_R)$ are non-empty, then $\cL(\cM)\cap \cL(\cS_L)=\emptyset$;

\item if $\cL(\cM)\cap \cL(\cS_L)\cap \cL(\cT_L)$ and $\cL(\cM)\cap \cL(\cS_R)\cap \cL(\cT_L)$ are non-empty, then $\cL(\cM)\cap \cL(\cT_R)=\emptyset$; and

\item if $\cL(\cM)\cap \cL(\cS_L)\cap \cL(\cT_R)$ and $\cL(\cM)\cap \cL(\cS_R)\cap \cL(\cT_R)$ are non-empty, then $\cL(\cM)\cap \cL(\cT_L)=\emptyset$.
\end{enumerate}
Thus,
\begin{enumerate}[(I)]
\item either $\cL(\cM)\cap \cL(\cS_L)$ or $\cL(\cM)\cap \cL(\cS_R)$ is empty, or

\item either $\cL(\cM)\cap \cL(\cT_L)$ or $\cL(\cM)\cap \cL(\cT_R)$ is empty, or

\item either
$$(\cL(\cM)\cap \cL(\cS_L)\cap \cL(\cT_R)) \cup(\cL(\cM)\cap \cL(\cS_R)\cap \cL(\cT_L))$$
or
$$(\cL(\cM)\cap \cL(\cS_L)\cap \cL(\cT_L))\cup(\cL(\cM)\cap \cL(\cS_R)\cap \cL(\cT_R))$$
is empty. 
\end{enumerate}
Therefore, $f(p,q)\leq \max \{f(p/2,q),\, f(p,q/2),\, 2f(p/2,q/2)\}.$

If $p\leq q/2$, then, by induction, $f(p, q)\le \max\{2q, q, 2q\}=2q$ and so, as $p\le q$, the lemma holds. If $p>q/2$, then, by induction, $f(p, q)\le \max \{2q, 2p, 2q\}=2q$ since $p\leq q$, and it again follows that the lemma holds. This completes the proof.
\end{proof}

Theorem~\ref{mainresult} is an almost immediate consequence of the next theorem:

\begin{theorem}\label{t:construction}
Let $k$ be a positive integer. Then, there exist balanced phylogenetic $X$-trees $\cS$ and $\cT$, where $|X|=2^{2^{k+1}-k-2}$, such that {\em mast}$(\cS,\cT)=2^{2^k-k}$.
\end{theorem}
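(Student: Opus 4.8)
The plan is to realize the two-level structure implicit in the outline of Section~\ref{sec:outline}. Set $h_1=2^k-k-1$ and $h_2=2^k-1$, so that $h_2\ge h_1$ (as $k\ge 1$), $h_2-h_1=k$, $h_1+k=h_2$, and $h_1+h_2=2^{k+1}-k-2$; thus $|X|=2^{h_1+h_2}$ as required. I would take $\cS$ and $\cT$ to be balanced trees on $2^{h_1+h_2}$ leaves, each decomposed into its $2^{h_1}$ balanced pendant subtrees $\cS_1,\ldots,\cS_{2^{h_1}}$ (resp.\ $\cT_1,\ldots,\cT_{2^{h_1}}$) of size $2^{h_2}$, indexed so that $\cS_i$ and $\cT_i$ occupy the same position in the common top-level balanced structure on $2^{h_1}$ slots. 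The goal is to label the leaves so that any two pendant subtrees meet in a pair of anti-caterpillars, and then read off $\textrm{mast}(\cS,\cT)=2^{h_1+1}=2^{2^k-k}$ from an upper and a matching lower bound.

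For the construction, apply Lemma~\ref{label_distribution} with these $h_1,h_2$ to obtain a bijective labelling for which $|\cL(\cS_i)\cap\cL(\cT_j)|=2^{h_2-h_1}=2^k$ for all $i,j$; this partitions $\cL(\cS_i)$ into $2^{h_1}$ blocks of size $2^k$ (the sets $\cL(\cS_i)\cap\cL(\cT_j)$ as $j$ varies), and likewise partitions each $\cL(\cT_j)$, since $h_1+k=h_2$ makes the counts match. By Corollary~\ref{c:caterpillars}, each pendant subtree, having height $h_2=2^k-1$, perfectly embeds $2^{h_1}=2^{2^k-k-1}$ label-disjoint $2^k$-caterpillars. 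I would use these embeddings to place the $2^{h_1}$ shared-label blocks of $\cS_i$ (resp.\ $\cT_j$) onto its caterpillar slots, and then, for each block $L=\cL(\cS_i)\cap\cL(\cT_j)$, choose an ordering of $L$ and assign its labels to the relevant slot of $\cS_i$ one way and to that of $\cT_j$ in the reverse way. Because the label assignments on $\cS$ and on $\cT$ are independent bijections and distinct blocks occupy disjoint slots, this can be carried out simultaneously for all blocks, yielding properties (i)--(iii) of Section~\ref{sec:outline}; in particular $\cS_i|L$ and $\cT_j|L$ form a pair of anti-caterpillars.

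With this labelling the two bounds are short. For the upper bound, $\cS$ and $\cT$ satisfy the hypotheses of Lemma~\ref{claim3} with $p=q=2^{h_1}$ and $r=2^k$, so $\textrm{mast}(\cS,\cT)\le 2\cdot\max\{p,q\}=2^{h_1+1}=2^{2^k-k}$. For the lower bound, pick any two leaves $\{a_i,b_i\}$ from each diagonal block $\cL(\cS_i)\cap\cL(\cT_i)$ and let $Y=\bigcup_i\{a_i,b_i\}$. Any two leaves restrict to a cherry, so in both $\cS$ and $\cT$ the subtree $\cS_i$ (resp.\ $\cT_i$) contributes the cherry on $\{a_i,b_i\}$; since $\cS_i$ and $\cT_i$ sit in the same top-level slot, $\cS|Y$ and $\cT|Y$ are both the balanced tree of height $h_1+1$ carrying the cherry $\{a_i,b_i\}$ in slot $i$, hence isomorphic. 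Thus $\textrm{mast}(\cS,\cT)\ge|Y|=2^{h_1+1}=2^{2^k-k}$, and combining the two bounds gives equality.

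The main obstacle is the compatibility in the construction step: simultaneously realizing the label partition of Lemma~\ref{label_distribution}, the perfect caterpillar packings of Corollary~\ref{c:caterpillars}, and the anti-caterpillar condition~(iii). The numerology $h_1+k=h_2$ is exactly what makes the $2^{h_1}$ size-$2^k$ blocks in each row (and each column) fill a pendant subtree and align with its caterpillar slots; once that is checked, the freedom to orient each caterpillar independently in $\cS$ and in $\cT$ supplies the anti-caterpillar pairs, and the remainder is bookkeeping.
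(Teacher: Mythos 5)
Your proposal is correct and follows essentially the same route as the paper's proof: the same choice $h_1=2^k-k-1$, $h_2=2^k-1$, Lemma~\ref{label_distribution} for the intersection pattern, Corollary~\ref{c:caterpillars} to realise the anti-caterpillar restrictions, and Lemma~\ref{claim3} for the upper bound. The only difference is that you spell out two steps the paper leaves terse --- how the caterpillar slots and reversed block orderings yield property (iii), and the explicit diagonal-cherry set $Y$ witnessing $\textrm{mast}(\cS,\cT)\ge 2^{2^k-k}$, which the paper dismisses as ``easily checked'' --- and both of these details are sound.
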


\begin{proof}
Let $h_1=2^k-k-1$ and $h_2=2^k-1$. Let $\cS$ and $\cT$ be two (unlabelled) balanced trees of height $h=h_1+h_2$. Thus, $\cS$ and $\cT$ each has size $2^h=2^{2^{k+1}-k-2}$ and, each furthermore, has $2^{h_1}$ balanced pendant subtrees of size $2^{h_2}$. Let $\cS_1,\cS_2,\ldots,\cS_{2^{h_1}}$ denote the pendant subtrees of $\cS$ of size $2^{h_2}$ and let $\cT_1,\cT_2,\ldots,\cT_{2^{h_1}}$ denote the pendant subtrees of $\cT$ of size $2^{h_2}$. 

By Lemma~\ref{label_distribution}, we can bijectively label the leaves of $\cS$ and $\cT$ with the elements in $\left\{1, 2, \ldots, 2^{h_1+h_2}\right\}$ so that, for all $i, j\in \{1, 2, \ldots, 2^{h_1}\}$, the pendant subtrees $\cS_i$ and $\cT_j$ have exactly $2^{h_2-h_1}=2^k$ common labels. Moreover, it follows by Corollary~\ref{c:caterpillars} that, under such a bijection, we can label the leaves of $\cS$ and $\cT$ so that, for all $i, j$, the restrictions $\cS_i|(\cL(\cS_i)\cap \cL(\cT_j))$ and $\cT_j|(\cL(\cS_i)\cap \cL(\cT_j))$ is in fact a pair of anti-caterpillars. With this labelling of $\cS$ and $\cT$, Lemma~\ref{claim3} says that:
$${\rm mast} (\cS, \cT)\le 2\cdot \max\left\{2^{h_1}, 2^{h_1}\right\}=2\cdot 2^{2^k-k-1}=2^{2^k-k}.$$
Since $\cS$ and $\cT$ have the same label sets, it is easily checked that this inequality is an equality, that is,
$${\rm mast}(\cS, \cT)=2^{2^k-k}.$$
\end{proof}

We now prove Theorem~\ref{mainresult}.

\begin{proof}[Proof of Theorem~\ref{mainresult}]
Pick an integer $k$ such that $k > 2\log (1/c)+2$. By Theorem~\ref{t:construction}, there exist balanced phylogenetic trees $\cS$ and $\cT$ on the same label set of size $n=2^{2^{k+1}-k-2}$ such that
$$\textrm{mast}(\cS,\cT)=2^{2^k-k}.$$ 
Since
$$\frac{2^{2^k-k}}{2^{2^k-\frac{k}{2}-1}}=2^{-\frac{k}{2}+1}<c,$$
where $2^{2^k-\frac{k}{2}-1}=n^{\frac 1 2}$, we have
$$2^{2^k-k}= \frac{2^{2^k-k}}{2^{2^k-\frac{k}{2}-1}}\cdot{2^{2^k-\frac{k}{2}-1}} < cn^{\frac 1 2},$$
and the theorem follows. 
\end{proof}

In particular, if we take $c=1$, then following the proof of Theorem~\ref{mainresult}, we choose $k=3$.  It follows that there are two balanced phylogenetic trees on $2^{11}=2048$ leaves such that their MAST has size $2^5=32$, whereas $\sqrt{2048}\approx 45.25$. Applying the construction yields the two trees described in Section~\ref{sec:outline}, and their MAST may be verified to have size 32 by standard computations \cite{phangorn}.

\section{Lower Bounds}
\label{sec:lowerBounds}

The main result of this paper has been to disprove Conjecture~\ref{conjecture} of \cite{martin2013maximum}. 
What then can be said about the size of a MAST of two balanced phylogenetic trees on the same label set? Corollary 16 of the same paper states that two  balanced phylogenetic trees on the same leaf set of cardinality $n$ have a MAST on at least $n^{\beta}$ leaves, where $\beta$ is not explicitly given but is defined by some formulae. Specifically, for any $\delta\in \left(0, \tfrac13-\tfrac{1}{3\sqrt{2}}\right)$, we can take 
$$\beta = \left(\frac{1 + 2 \log (1-3\delta)}{\log(1-3\delta)-\log \delta}\right).$$ 
Numerically, the largest value of $\beta$ that may be inferred from their approach is approximately 0.149.  
We have observed that this lower bound can be slightly improved as follows.

\begingroup
\def\thetheorem{\ref{thm:lowerbound}}
\begin{theorem}
If $\cS$ and $\cT$ are two balanced phylogenetic $X$-trees, where $n=|X|$, then
$${\rm mast}(\cS, \cT)\ge n^{0.17}>n^{\frac{1}{6}}.$$
\end{theorem}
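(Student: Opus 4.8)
The plan is to set up a recursion for the minimum possible MAST size over pairs of balanced trees and then to determine its growth rate. Because the maximal pendant subtrees $\cS_L,\cS_R$ of $\cS$ and $\cT_L,\cT_R$ of $\cT$ need not share their full leaf sets once we descend into the trees, I would track not only a common height but also the number of shared leaves. Concretely, for non-negative integers $h_1\le h_2$ and $s\le 2^{h_1}$, let $\Phi(h_1,h_2,s)$ denote the minimum of ${\rm mast}(\cS,\cT)$ over all balanced trees $\cS,\cT$ of heights $h_1$ and $h_2$ with $|\cL(\cS)\cap\cL(\cT)|=s$. The target quantity is then $\Phi(m,m,2^m)$, where $n=2^m$, and the goal is to show it grows at least like $n^{0.17}$.

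First I would record two structural facts, proved by a simultaneous induction alongside the main bound: $\Phi$ is non-decreasing in $s$ (extra shared leaves cannot shrink a MAST), and it obeys a concavity/subadditivity estimate in $s$, so that whenever the adversary is free to distribute a pool of shared leaves between two otherwise independent subproblems, the worst case for us is the balanced split. These properties let me pass freely between ``exactly $s$'' and ``at least $s$'' and pin down the adversary's extremal distributions in the recursion.

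The heart of the argument is to apply the recursive MAST identity~(\ref{eqn:mast}) at the two roots. Writing $Z=\cL(\cS)\cap\cL(\cT)$ and partitioning it into the four quadrants $Z_{LL},Z_{LR},Z_{RL},Z_{RR}$ according to which of $\cS_L,\cS_R$ and $\cT_L,\cT_R$ each shared leaf lies in, the identity yields
$${\rm mast}(\cS,\cT)\ge\max\big\{{\rm mast}(\cS_L,\cT_L)+{\rm mast}(\cS_R,\cT_R),\ {\rm mast}(\cS_L,\cT_R)+{\rm mast}(\cS_R,\cT_L),\ {\rm mast}(\cS_L,\cT),\ {\rm mast}(\cS_R,\cT)\big\},$$
with the symmetric terms ${\rm mast}(\cS,\cT_L),{\rm mast}(\cS,\cT_R)$ treated identically. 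Lower-bounding each summand by the appropriate value of $\Phi$ turns this into a recursion in which the adversary chooses the quadrant sizes subject to the row/column constraints $|Z_{LL}|+|Z_{LR}|\le 2^{h_1-1}$ and the three analogues. Crucially I would keep the ``map one half into the whole other tree'' terms such as ${\rm mast}(\cS_L,\cT)$, which produce unequal-height instances $\Phi(h_1-1,h_2,\cdot)$: these are exactly what stops the adversary from cheaply defeating the diagonal terms by concentrating all shared leaves on one side, and retaining them is where I expect to improve on the coarser $\beta\approx0.149$ bound of Martin and Thatte.

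Finally I would unroll the recursion. Using the concavity fact, the adversary's optimal choice balances the shared leaves, and each application trades a multiplicative gain against a controlled shrinkage of the height and overlap parameters; iterating $t$ times expresses $\Phi(m,m,2^m)$ as a growing factor times a base-case value $\Phi(h_1',h_2',s')$ on a smaller instance, which I bound below by a directly computed MAST value for small balanced trees (in the crudest case simply by $1$). Optimizing over $t$ — equivalently over the fraction of shared leaves retained per step — then produces the exponent. I expect the main obstacle to be precisely this last step: the recursion has three coupled parameters, so identifying the adversary's exact extremal strategy and extracting the \emph{sharp} exponent rather than a lossy estimate requires careful case analysis together with a numerical optimization, and it is the tightness of this optimization, reinforced by a good computed base case, that should separate the claimed $0.17$ (and hence $n^{0.17}>n^{\frac16}$) from the earlier $0.149$.
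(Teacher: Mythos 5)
Your setup coincides with the paper's: your $\Phi(h_1,h_2,s)$ is exactly the quantity $g(h_1,h_2,t)$ that the paper inducts on, the recursion comes from Equation~\eqref{eqn:mast}, and you are right that retaining the unequal-height terms such as ${\rm mast}(\cS_L,\cT)$ is essential. The gap is in the engine you propose for closing the induction. First, your concavity claim points the wrong way: for a concave, power-like bound ($s\mapsto s^{\alpha}$, $\alpha<1$), a sum $\Phi(\cdot,\cdot,s_1)+\Phi(\cdot,\cdot,s_2)$ with $s_1+s_2$ fixed is minimized at an \emph{extreme} split, not a balanced one. Balanced quadrants are the easy case (they hand you a factor of two from the sum terms); the adversary's dangerous strategy is concentration with a small leak, and that strategy defeats any induction hypothesis of pure-power form $t^{\alpha}$. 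Concretely, put roughly $(1-2\epsilon-\delta)t$ of the overlap in $\cL(\cS_L)\cap\cL(\cT_L)$, $\epsilon t$ in each off-diagonal quadrant, and a tiny $\delta t$ in $\cL(\cS_R)\cap\cL(\cT_R)$ (e.g.\ $\epsilon=0.04$, $\delta$ minute, $\alpha=0.22$): the row and column terms then see only $(1-\epsilon-\delta)t$ shared leaves, the diagonal sum is about $((1-2\epsilon)t)^{\alpha}+(\delta t)^{\alpha}$, and the anti-diagonal sum is $2(\epsilon t)^{\alpha}$ --- every one of the six terms is strictly below $t^{\alpha}$. So a constant factor is lost at every level, these losses compound over the $\Theta(\log n)$ levels, and no ``unrolling and optimizing at the end'' recovers them, because the inequality you would optimize is already false at a single level.

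The missing idea --- the heart of the paper's proof, adapted from Lemma 15 of Martin and Thatte \cite{martin2013maximum} --- is to strengthen the induction hypothesis with a height-dependent penalty:
$$g(h_1,h_2,t)\ \ge\ 2^{0.22\log t\,-\,0.025(h_1+h_2)},$$
so that every descent into subtrees earns a multiplicative credit of $2^{0.025}$ per unit of height lost. This credit is precisely what absorbs the adversary's leakage. A pigeonhole threshold (the paper uses $0.037t$) splits the analysis into five cases: when every quadrant except $\cL(\cS_L)\cap\cL(\cT_L)$ is below the threshold, the surviving overlap is at least $0.889t$ (or $0.926t$ for the half-versus-whole terms), and the loss $0.22\log 0.889$ (resp.\ $0.22\log 0.926$) is paid for by the credit $0.05$ (resp.\ $0.025$); when some other quadrant exceeds the threshold, the factor two from the sum terms covers the loss, since $1+0.22\log 0.037+0.05>0$. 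Substituting $h_1=h_2=\log n$ and $t=n$ then gives $n^{0.22-0.05}=n^{0.17}$. Note that even granting your monotonicity and concavity facts (the first is easy; the second is both doubtful and unhelpful), they cannot manufacture this height credit: the correction term must be built into the quantity you induct on, which is the step your plan omits.
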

\addtocounter{theorem}{-1}
\endgroup

\begin{proof}
The proof closely follows the approach of~Lemma 15 in~\cite{martin2013maximum} for which Corollary 16 in~\cite{martin2013maximum} is an almost immediate consequence. Let $g(h_1,h_2,t)$ be the minimum size of a MAST over all pairs of balanced phylogenetic trees $\cS$ and $\cT$ such that $\cS$ has $2^{h_1}$ leaves, $\cT$ has $2^{h_2}$ leaves, and the leaf sets overlap in exactly $t>0$ leaves. We first show by induction on $h_1+h_2$ that
\begin{equation}\label{eqn:lowerbound}
    g(h_1,h_2,t)\geq 2^{0.22\log t - 0.025(h_1+h_2)}.
\end{equation}
For the base case, if $h_1+h_2\in \{0,1\}$, then at least one of the trees consists of a single leaf, and so $g(h_1,h_2,1)=1\geq 2^0$ and Equation~\eqref{eqn:lowerbound} holds. Now, assume that
Equation~\eqref{eqn:lowerbound} holds for all pairs of balanced phylogenetic trees whose heights sum to at most $h_1+h_2-1$. Let $\cS_L$ and $\cS_R$ be the two maximal pendant subtrees of $\cS$, and let $\cT_L$ and $\cT_R$ be the two maximal pendant subtrees of $\cT$. By Equation~\eqref{eqn:mast},
\begin{align*}
    \textrm{mast}(\cS, \cT) \geq \max \big\{ & g(h_1-1,h_2-1,|\cL(\cS_L)\cap\cL(\cT_L)|) + g(h_1-1,h_2-1,|\cL(\cS_R)\cap\cL(\cT_R)|), \\
    & g(h_1-1,h_2-1,|\cL(\cS_L)\cap\cL(\cT_R)|) + g(h_1-1,h_2-1,|\cL(\cS_R)\cap\cL(\cT_L)|), \\
    & g(h_1,h_2-1,|\cL(\cS)\cap\cL(\cT_L)|),\, g(h_1,h_2-1,|\cL(\cS)\cap\cL(\cT_R)|), \\
    & g(h_1-1,h_2,|\cL(\cS_L)\cap\cL(\cT)|),\, g(h_1-1,h_2,|\cL(\cS_R)\cap\cL(\cT)|)  \big\}.
\end{align*}
We freely use this inequality in the remainder of the proof.

Without loss of generality, we may assume that the largest overlap between the leaf sets $\cS_L, \cS_R$ and $\cT_L, \cT_R$ is between $\cS_L$ and $ \cT_L$. By the pigeonhole principle, $|\cL(\cS_L)\cap \cL(\cT_L)|\geq t/4$, and so one of the following cases must occur by exhaustion:
\begin{enumerate}[(i)]
    \item $|\cL(\cS_R)\cap \cL(\cT_R)|\geq 0.037t$;\label{case:1}
    \item $|\cL(\cS_L)\cap \cL(\cT_R)|,\, |\cL(\cS_R)\cap \cL(\cT_L)|\geq 0.037t$;\label{case:2}
    \item $|\cL(\cS_R)\cap \cL(\cT_R)|,\, |\cL(\cS_L)\cap \cL(\cT_R)|,\, |\cL(\cS_R)\cap \cL(\cT_L)|< 0.037t$;\label{case:3}
    \item $|\cL(\cS_L)\cap \cL(\cT_R)|\geq0.037t$ and $|\cL(\cS_R)\cap \cL(\cT_R)|,\, |\cL(\cS_R)\cap \cL(\cT_L)|< 0.037t$;\label{case:4}
    \item $|\cL(\cS_R)\cap \cL(\cT_R)|,\, |\cL(\cS_L)\cap \cL(\cT_R)|<0.037t$ and $|\cL(\cS_R)\cap \cL(\cT_L)|\geq 0.037t$.\label{case:5}
\end{enumerate}
In Case (i), it follows by the induction assumption that
\begin{align*}
\textrm{mast}(\cS,\cT)\geq&\ g(h_1-1,h_2-1,0.25t)+g(h_1-1,h_2-1,0.037t)\\
    >&\ 2\cdot 2^{0.22\log 0.037t - 0.025(h_1-1+h_2-1)}\\
    =&\ 2^{1+0.22\log 0.037+0.22\log t + 0.05 - 0.025(h_1+h_2)}\\
    >&\ 2^{0.22\log t - 0.025(h_1+h_2)}.
\end{align*}
In Case (ii), by a similar calculation to that in Case (i), we obtain $$\textrm{mast}(\cS,\cT)\geq 2g(h_1-1,h_2-1,0.037t)>2^{0.22\log t - 0.025(h_1+h_2)}.$$
For Case (iii), since the total overlap in leaf sets is $t$, we must have $|\cL(\cS_L)\cap \cL(\cT_L)|\geq 0.889 t$. So,
\begin{align*}
    \textrm{mast}(\cS,\cT)\geq&\ 2^{0.22\log 0.889t - 0.025(h_1-1+h_2-1)}\\
    =&\ 2^{0.22\log 0.889+0.22\log t + 0.05 - 0.025(h_1+h_2)}\\
    >&\ 2^{0.22\log t - 0.025(h_1+h_2)}.
\end{align*}
Finally, in Case (iv), we have  $|\cL(\cS_L)\cap \cL(\cT)|\geq 0.926t$ and, in Case (v), we have $|\cL(\cS)\cap \cL(\cT_L)|\geq 0.926t$.
In both cases, we have
\begin{align*}
    \textrm{mast}(\cS,\cT)\geq&\ 2^{0.22\log 0.926t - 0.025(h_1+h_2-1)}\\
    =&\ 2^{0.22\log 0.926+0.22\log t + 0.025 - 0.025(h_1+h_2)}\\
    >&\ 2^{0.22\log t - 0.025(h_1+h_2)}.
\end{align*}
Thus, Equation~\eqref{eqn:lowerbound} holds by induction. Substituting $h_1=h_2=\log n$ and $t=n$ into Equation~\eqref{eqn:lowerbound}, we obtain that any two balanced phylogenetic trees on the same leaf set of size $n$ have a MAST on at least
$$g(\log n,\log n,n)\geq 2^{0.22\log n - 0.05\log n} = n^{0.17}>n^{\frac{1}{6}}$$
leaves. This completes the proof of the theorem.
\end{proof}

\section{Acknowledgements}
All authors thank Schloss Dagstuhl---Leibniz Centre for Informatics---for hosting the Seminar 19443 \emph{Algorithms and Complexity in Phylogenetics} in October 2019, where this work was initiated and Prof.~Mike Steel for hosting a workshop in Sumner, New Zealand in March 2020. The first, second and fifth authors thank the New Zealand Marsden Fund for financial support. The first and fourth authors thank the Erskine Visiting Fellowship Programme for supporting their extended visits to the University of Canterbury, New Zealand. The third author was partially supported by the US National Science Foundation (DMS 1847271) and by a grant from the Simons Foundation (355824).  The fourth author thanks the Simons Foundation for their grant to support collaboration and travel. The sixth author thanks the German Academic Scholarship Foundation for a doctoral scholarship, under which parts of this work were conducted. All authors thank Sean Cleary for helpful discussions.

\bibliographystyle{abbrv}
\bibliography{mast}

\appendix
\pagebreak

\section{Example on 2048 leaves}\label{a:example2048}
The following two balanced phylogenetic trees on label set $\{1,2,\ldots,2048\}$ may be shown to have a MAST of size 32 by standard computational means
\cite{phangorn}. They are presented in standard Newick format below.

$\cS=$\texttt{\footnotesize{\seqsplit{(((((((((((1,2),(3,12)),((4,11),(9,10))),(((5,13),(22,30)),((39,47),(55,63)))),((((6,14),(21,29)),((72,80),(88,96))),(((17,18),(19,28)),((20,27),(25,26))))),(((((7,15),(23,31)),((38,46),(53,61))),(((49,50),(51,60)),((52,59),(57,58)))),((((33,34),(35,44)),((36,43),(41,42))),(((37,45),(54,62)),((104,112),(120,128)))))),((((((65,66),(67,76)),((68,75),(73,74))),(((69,77),(86,94)),((103,111),(119,127)))),((((70,78),(85,93)),((8,16),(24,32))),(((81,82),(83,92)),((84,91),(89,90))))),(((((71,79),(87,95)),((102,110),(117,125))),(((113,114),(115,124)),((116,123),(121,122)))),((((97,98),(99,108)),((100,107),(105,106))),(((101,109),(118,126)),((40,48),(56,64))))))),(((((((129,130),(131,140)),((132,139),(137,138))),(((133,141),(150,158)),((167,175),(183,191)))),((((134,142),(149,157)),((200,208),(216,224))),(((145,146),(147,156)),((148,155),(153,154))))),(((((135,143),(151,159)),((166,174),(181,189))),(((177,178),(179,188)),((180,187),(185,186)))),((((161,162),(163,172)),((164,171),(169,170))),(((165,173),(182,190)),((232,240),(248,256)))))),((((((193,194),(195,204)),((196,203),(201,202))),(((197,205),(214,222)),((231,239),(247,255)))),((((198,206),(213,221)),((136,144),(152,160))),(((209,210),(211,220)),((212,219),(217,218))))),(((((199,207),(215,223)),((230,238),(245,253))),(((241,242),(243,252)),((244,251),(249,250)))),((((225,226),(227,236)),((228,235),(233,234))),(((229,237),(246,254)),((168,176),(184,192)))))))),((((((((257,258),(259,268)),((260,267),(265,266))),(((261,269),(278,286)),((295,303),(311,319)))),((((262,270),(277,285)),((328,336),(344,352))),(((273,274),(275,284)),((276,283),(281,282))))),(((((263,271),(279,287)),((294,302),(309,317))),(((305,306),(307,316)),((308,315),(313,314)))),((((289,290),(291,300)),((292,299),(297,298))),(((293,301),(310,318)),((360,368),(376,384)))))),((((((321,322),(323,332)),((324,331),(329,330))),(((325,333),(342,350)),((359,367),(375,383)))),((((326,334),(341,349)),((264,272),(280,288))),(((337,338),(339,348)),((340,347),(345,346))))),(((((327,335),(343,351)),((358,366),(373,381))),(((369,370),(371,380)),((372,379),(377,378)))),((((353,354),(355,364)),((356,363),(361,362))),(((357,365),(374,382)),((296,304),(312,320))))))),(((((((385,386),(387,396)),((388,395),(393,394))),(((389,397),(406,414)),((423,431),(439,447)))),((((390,398),(405,413)),((456,464),(472,480))),(((401,402),(403,412)),((404,411),(409,410))))),(((((391,399),(407,415)),((422,430),(437,445))),(((433,434),(435,444)),((436,443),(441,442)))),((((417,418),(419,428)),((420,427),(425,426))),(((421,429),(438,446)),((488,496),(504,512)))))),((((((449,450),(451,460)),((452,459),(457,458))),(((453,461),(470,478)),((487,495),(503,511)))),((((454,462),(469,477)),((392,400),(408,416))),(((465,466),(467,476)),((468,475),(473,474))))),(((((455,463),(471,479)),((486,494),(501,509))),(((497,498),(499,508)),((500,507),(505,506)))),((((481,482),(483,492)),((484,491),(489,490))),(((485,493),(502,510)),((424,432),(440,448))))))))),(((((((((513,514),(515,524)),((516,523),(521,522))),(((517,525),(534,542)),((551,559),(567,575)))),((((518,526),(533,541)),((584,592),(600,608))),(((529,530),(531,540)),((532,539),(537,538))))),(((((519,527),(535,543)),((550,558),(565,573))),(((561,562),(563,572)),((564,571),(569,570)))),((((545,546),(547,556)),((548,555),(553,554))),(((549,557),(566,574)),((616,624),(632,640)))))),((((((577,578),(579,588)),((580,587),(585,586))),(((581,589),(598,606)),((615,623),(631,639)))),((((582,590),(597,605)),((520,528),(536,544))),(((593,594),(595,604)),((596,603),(601,602))))),(((((583,591),(599,607)),((614,622),(629,637))),(((625,626),(627,636)),((628,635),(633,634)))),((((609,610),(611,620)),((612,619),(617,618))),(((613,621),(630,638)),((552,560),(568,576))))))),(((((((641,642),(643,652)),((644,651),(649,650))),(((645,653),(662,670)),((679,687),(695,703)))),((((646,654),(661,669)),((712,720),(728,736))),(((657,658),(659,668)),((660,667),(665,666))))),(((((647,655),(663,671)),((678,686),(693,701))),(((689,690),(691,700)),((692,699),(697,698)))),((((673,674),(675,684)),((676,683),(681,682))),(((677,685),(694,702)),((744,752),(760,768)))))),((((((705,706),(707,716)),((708,715),(713,714))),(((709,717),(726,734)),((743,751),(759,767)))),((((710,718),(725,733)),((648,656),(664,672))),(((721,722),(723,732)),((724,731),(729,730))))),(((((711,719),(727,735)),((742,750),(757,765))),(((753,754),(755,764)),((756,763),(761,762)))),((((737,738),(739,748)),((740,747),(745,746))),(((741,749),(758,766)),((680,688),(696,704)))))))),((((((((769,770),(771,780)),((772,779),(777,778))),(((773,781),(790,798)),((807,815),(823,831)))),((((774,782),(789,797)),((840,848),(856,864))),(((785,786),(787,796)),((788,795),(793,794))))),(((((775,783),(791,799)),((806,814),(821,829))),(((817,818),(819,828)),((820,827),(825,826)))),((((801,802),(803,812)),((804,811),(809,810))),(((805,813),(822,830)),((872,880),(888,896)))))),((((((833,834),(835,844)),((836,843),(841,842))),(((837,845),(854,862)),((871,879),(887,895)))),((((838,846),(853,861)),((776,784),(792,800))),(((849,850),(851,860)),((852,859),(857,858))))),(((((839,847),(855,863)),((870,878),(885,893))),(((881,882),(883,892)),((884,891),(889,890)))),((((865,866),(867,876)),((868,875),(873,874))),(((869,877),(886,894)),((808,816),(824,832))))))),(((((((897,898),(899,908)),((900,907),(905,906))),(((901,909),(918,926)),((935,943),(951,959)))),((((902,910),(917,925)),((968,976),(984,992))),(((913,914),(915,924)),((916,923),(921,922))))),(((((903,911),(919,927)),((934,942),(949,957))),(((945,946),(947,956)),((948,955),(953,954)))),((((929,930),(931,940)),((932,939),(937,938))),(((933,941),(950,958)),((1000,1008),(1016,1024)))))),((((((961,962),(963,972)),((964,971),(969,970))),(((965,973),(982,990)),((999,1007),(1015,1023)))),((((966,974),(981,989)),((904,912),(920,928))),(((977,978),(979,988)),((980,987),(985,986))))),(((((967,975),(983,991)),((998,1006),(1013,1021))),(((1009,1010),(1011,1020)),((1012,1019),(1017,1018)))),((((993,994),(995,1004)),((996,1003),(1001,1002))),(((997,1005),(1014,1022)),((936,944),(952,960)))))))))),((((((((((1025,1026),(1027,1036)),((1028,1035),(1033,1034))),(((1029,1037),(1046,1054)),((1063,1071),(1079,1087)))),((((1030,1038),(1045,1053)),((1096,1104),(1112,1120))),(((1041,1042),(1043,1052)),((1044,1051),(1049,1050))))),(((((1031,1039),(1047,1055)),((1062,1070),(1077,1085))),(((1073,1074),(1075,1084)),((1076,1083),(1081,1082)))),((((1057,1058),(1059,1068)),((1060,1067),(1065,1066))),(((1061,1069),(1078,1086)),((1128,1136),(1144,1152)))))),((((((1089,1090),(1091,1100)),((1092,1099),(1097,1098))),(((1093,1101),(1110,1118)),((1127,1135),(1143,1151)))),((((1094,1102),(1109,1117)),((1032,1040),(1048,1056))),(((1105,1106),(1107,1116)),((1108,1115),(1113,1114))))),(((((1095,1103),(1111,1119)),((1126,1134),(1141,1149))),(((1137,1138),(1139,1148)),((1140,1147),(1145,1146)))),((((1121,1122),(1123,1132)),((1124,1131),(1129,1130))),(((1125,1133),(1142,1150)),((1064,1072),(1080,1088))))))),(((((((1153,1154),(1155,1164)),((1156,1163),(1161,1162))),(((1157,1165),(1174,1182)),((1191,1199),(1207,1215)))),((((1158,1166),(1173,1181)),((1224,1232),(1240,1248))),(((1169,1170),(1171,1180)),((1172,1179),(1177,1178))))),(((((1159,1167),(1175,1183)),((1190,1198),(1205,1213))),(((1201,1202),(1203,1212)),((1204,1211),(1209,1210)))),((((1185,1186),(1187,1196)),((1188,1195),(1193,1194))),(((1189,1197),(1206,1214)),((1256,1264),(1272,1280)))))),((((((1217,1218),(1219,1228)),((1220,1227),(1225,1226))),(((1221,1229),(1238,1246)),((1255,1263),(1271,1279)))),((((1222,1230),(1237,1245)),((1160,1168),(1176,1184))),(((1233,1234),(1235,1244)),((1236,1243),(1241,1242))))),(((((1223,1231),(1239,1247)),((1254,1262),(1269,1277))),(((1265,1266),(1267,1276)),((1268,1275),(1273,1274)))),((((1249,1250),(1251,1260)),((1252,1259),(1257,1258))),(((1253,1261),(1270,1278)),((1192,1200),(1208,1216)))))))),((((((((1281,1282),(1283,1292)),((1284,1291),(1289,1290))),(((1285,1293),(1302,1310)),((1319,1327),(1335,1343)))),((((1286,1294),(1301,1309)),((1352,1360),(1368,1376))),(((1297,1298),(1299,1308)),((1300,1307),(1305,1306))))),(((((1287,1295),(1303,1311)),((1318,1326),(1333,1341))),(((1329,1330),(1331,1340)),((1332,1339),(1337,1338)))),((((1313,1314),(1315,1324)),((1316,1323),(1321,1322))),(((1317,1325),(1334,1342)),((1384,1392),(1400,1408)))))),((((((1345,1346),(1347,1356)),((1348,1355),(1353,1354))),(((1349,1357),(1366,1374)),((1383,1391),(1399,1407)))),((((1350,1358),(1365,1373)),((1288,1296),(1304,1312))),(((1361,1362),(1363,1372)),((1364,1371),(1369,1370))))),(((((1351,1359),(1367,1375)),((1382,1390),(1397,1405))),(((1393,1394),(1395,1404)),((1396,1403),(1401,1402)))),((((1377,1378),(1379,1388)),((1380,1387),(1385,1386))),(((1381,1389),(1398,1406)),((1320,1328),(1336,1344))))))),(((((((1409,1410),(1411,1420)),((1412,1419),(1417,1418))),(((1413,1421),(1430,1438)),((1447,1455),(1463,1471)))),((((1414,1422),(1429,1437)),((1480,1488),(1496,1504))),(((1425,1426),(1427,1436)),((1428,1435),(1433,1434))))),(((((1415,1423),(1431,1439)),((1446,1454),(1461,1469))),(((1457,1458),(1459,1468)),((1460,1467),(1465,1466)))),((((1441,1442),(1443,1452)),((1444,1451),(1449,1450))),(((1445,1453),(1462,1470)),((1512,1520),(1528,1536)))))),((((((1473,1474),(1475,1484)),((1476,1483),(1481,1482))),(((1477,1485),(1494,1502)),((1511,1519),(1527,1535)))),((((1478,1486),(1493,1501)),((1416,1424),(1432,1440))),(((1489,1490),(1491,1500)),((1492,1499),(1497,1498))))),(((((1479,1487),(1495,1503)),((1510,1518),(1525,1533))),(((1521,1522),(1523,1532)),((1524,1531),(1529,1530)))),((((1505,1506),(1507,1516)),((1508,1515),(1513,1514))),(((1509,1517),(1526,1534)),((1448,1456),(1464,1472))))))))),(((((((((1537,1538),(1539,1548)),((1540,1547),(1545,1546))),(((1541,1549),(1558,1566)),((1575,1583),(1591,1599)))),((((1542,1550),(1557,1565)),((1608,1616),(1624,1632))),(((1553,1554),(1555,1564)),((1556,1563),(1561,1562))))),(((((1543,1551),(1559,1567)),((1574,1582),(1589,1597))),(((1585,1586),(1587,1596)),((1588,1595),(1593,1594)))),((((1569,1570),(1571,1580)),((1572,1579),(1577,1578))),(((1573,1581),(1590,1598)),((1640,1648),(1656,1664)))))),((((((1601,1602),(1603,1612)),((1604,1611),(1609,1610))),(((1605,1613),(1622,1630)),((1639,1647),(1655,1663)))),((((1606,1614),(1621,1629)),((1544,1552),(1560,1568))),(((1617,1618),(1619,1628)),((1620,1627),(1625,1626))))),(((((1607,1615),(1623,1631)),((1638,1646),(1653,1661))),(((1649,1650),(1651,1660)),((1652,1659),(1657,1658)))),((((1633,1634),(1635,1644)),((1636,1643),(1641,1642))),(((1637,1645),(1654,1662)),((1576,1584),(1592,1600))))))),(((((((1665,1666),(1667,1676)),((1668,1675),(1673,1674))),(((1669,1677),(1686,1694)),((1703,1711),(1719,1727)))),((((1670,1678),(1685,1693)),((1736,1744),(1752,1760))),(((1681,1682),(1683,1692)),((1684,1691),(1689,1690))))),(((((1671,1679),(1687,1695)),((1702,1710),(1717,1725))),(((1713,1714),(1715,1724)),((1716,1723),(1721,1722)))),((((1697,1698),(1699,1708)),((1700,1707),(1705,1706))),(((1701,1709),(1718,1726)),((1768,1776),(1784,1792)))))),((((((1729,1730),(1731,1740)),((1732,1739),(1737,1738))),(((1733,1741),(1750,1758)),((1767,1775),(1783,1791)))),((((1734,1742),(1749,1757)),((1672,1680),(1688,1696))),(((1745,1746),(1747,1756)),((1748,1755),(1753,1754))))),(((((1735,1743),(1751,1759)),((1766,1774),(1781,1789))),(((1777,1778),(1779,1788)),((1780,1787),(1785,1786)))),((((1761,1762),(1763,1772)),((1764,1771),(1769,1770))),(((1765,1773),(1782,1790)),((1704,1712),(1720,1728)))))))),((((((((1793,1794),(1795,1804)),((1796,1803),(1801,1802))),(((1797,1805),(1814,1822)),((1831,1839),(1847,1855)))),((((1798,1806),(1813,1821)),((1864,1872),(1880,1888))),(((1809,1810),(1811,1820)),((1812,1819),(1817,1818))))),(((((1799,1807),(1815,1823)),((1830,1838),(1845,1853))),(((1841,1842),(1843,1852)),((1844,1851),(1849,1850)))),((((1825,1826),(1827,1836)),((1828,1835),(1833,1834))),(((1829,1837),(1846,1854)),((1896,1904),(1912,1920)))))),((((((1857,1858),(1859,1868)),((1860,1867),(1865,1866))),(((1861,1869),(1878,1886)),((1895,1903),(1911,1919)))),((((1862,1870),(1877,1885)),((1800,1808),(1816,1824))),(((1873,1874),(1875,1884)),((1876,1883),(1881,1882))))),(((((1863,1871),(1879,1887)),((1894,1902),(1909,1917))),(((1905,1906),(1907,1916)),((1908,1915),(1913,1914)))),((((1889,1890),(1891,1900)),((1892,1899),(1897,1898))),(((1893,1901),(1910,1918)),((1832,1840),(1848,1856))))))),(((((((1921,1922),(1923,1932)),((1924,1931),(1929,1930))),(((1925,1933),(1942,1950)),((1959,1967),(1975,1983)))),((((1926,1934),(1941,1949)),((1992,2000),(2008,2016))),(((1937,1938),(1939,1948)),((1940,1947),(1945,1946))))),(((((1927,1935),(1943,1951)),((1958,1966),(1973,1981))),(((1969,1970),(1971,1980)),((1972,1979),(1977,1978)))),((((1953,1954),(1955,1964)),((1956,1963),(1961,1962))),(((1957,1965),(1974,1982)),((2024,2032),(2040,2048)))))),((((((1985,1986),(1987,1996)),((1988,1995),(1993,1994))),(((1989,1997),(2006,2014)),((2023,2031),(2039,2047)))),((((1990,1998),(2005,2013)),((1928,1936),(1944,1952))),(((2001,2002),(2003,2012)),((2004,2011),(2009,2010))))),(((((1991,1999),(2007,2015)),((2022,2030),(2037,2045))),(((2033,2034),(2035,2044)),((2036,2043),(2041,2042)))),((((2017,2018),(2019,2028)),((2020,2027),(2025,2026))),(((2021,2029),(2038,2046)),((1960,1968),(1976,1984)))))))))));}}}

$\cT=$ \footnotesize{\texttt{\seqsplit{(((((((((((8,7),(6,133)),((5,134),(136,135))),(((4,132),(259,387)),((514,642),(770,898)))),((((3,131),(260,388)),((1025,1153),(1281,1409))),(((264,263),(262,389)),((261,390),(392,391))))),(((((2,130),(258,386)),((515,643),(772,900))),(((776,775),(774,901)),((773,902),(904,903)))),((((520,519),(518,645)),((517,646),(648,647))),(((516,644),(771,899)),((1537,1665),(1793,1921)))))),((((((1032,1031),(1030,1157)),((1029,1158),(1160,1159))),(((1028,1156),(1283,1411)),((1538,1666),(1794,1922)))),((((1027,1155),(1284,1412)),((1,129),(257,385))),(((1288,1287),(1286,1413)),((1285,1414),(1416,1415))))),(((((1026,1154),(1282,1410)),((1539,1667),(1796,1924))),(((1800,1799),(1798,1925)),((1797,1926),(1928,1927)))),((((1544,1543),(1542,1669)),((1541,1670),(1672,1671))),(((1540,1668),(1795,1923)),((513,641),(769,897))))))),(((((((16,15),(14,141)),((13,142),(144,143))),(((12,140),(267,395)),((522,650),(778,906)))),((((11,139),(268,396)),((1033,1161),(1289,1417))),(((272,271),(270,397)),((269,398),(400,399))))),(((((10,138),(266,394)),((523,651),(780,908))),(((784,783),(782,909)),((781,910),(912,911)))),((((528,527),(526,653)),((525,654),(656,655))),(((524,652),(779,907)),((1545,1673),(1801,1929)))))),((((((1040,1039),(1038,1165)),((1037,1166),(1168,1167))),(((1036,1164),(1291,1419)),((1546,1674),(1802,1930)))),((((1035,1163),(1292,1420)),((9,137),(265,393))),(((1296,1295),(1294,1421)),((1293,1422),(1424,1423))))),(((((1034,1162),(1290,1418)),((1547,1675),(1804,1932))),(((1808,1807),(1806,1933)),((1805,1934),(1936,1935)))),((((1552,1551),(1550,1677)),((1549,1678),(1680,1679))),(((1548,1676),(1803,1931)),((521,649),(777,905)))))))),((((((((24,23),(22,149)),((21,150),(152,151))),(((20,148),(275,403)),((530,658),(786,914)))),((((19,147),(276,404)),((1041,1169),(1297,1425))),(((280,279),(278,405)),((277,406),(408,407))))),(((((18,146),(274,402)),((531,659),(788,916))),(((792,791),(790,917)),((789,918),(920,919)))),((((536,535),(534,661)),((533,662),(664,663))),(((532,660),(787,915)),((1553,1681),(1809,1937)))))),((((((1048,1047),(1046,1173)),((1045,1174),(1176,1175))),(((1044,1172),(1299,1427)),((1554,1682),(1810,1938)))),((((1043,1171),(1300,1428)),((17,145),(273,401))),(((1304,1303),(1302,1429)),((1301,1430),(1432,1431))))),(((((1042,1170),(1298,1426)),((1555,1683),(1812,1940))),(((1816,1815),(1814,1941)),((1813,1942),(1944,1943)))),((((1560,1559),(1558,1685)),((1557,1686),(1688,1687))),(((1556,1684),(1811,1939)),((529,657),(785,913))))))),(((((((32,31),(30,157)),((29,158),(160,159))),(((28,156),(283,411)),((538,666),(794,922)))),((((27,155),(284,412)),((1049,1177),(1305,1433))),(((288,287),(286,413)),((285,414),(416,415))))),(((((26,154),(282,410)),((539,667),(796,924))),(((800,799),(798,925)),((797,926),(928,927)))),((((544,543),(542,669)),((541,670),(672,671))),(((540,668),(795,923)),((1561,1689),(1817,1945)))))),((((((1056,1055),(1054,1181)),((1053,1182),(1184,1183))),(((1052,1180),(1307,1435)),((1562,1690),(1818,1946)))),((((1051,1179),(1308,1436)),((25,153),(281,409))),(((1312,1311),(1310,1437)),((1309,1438),(1440,1439))))),(((((1050,1178),(1306,1434)),((1563,1691),(1820,1948))),(((1824,1823),(1822,1949)),((1821,1950),(1952,1951)))),((((1568,1567),(1566,1693)),((1565,1694),(1696,1695))),(((1564,1692),(1819,1947)),((537,665),(793,921))))))))),(((((((((40,39),(38,165)),((37,166),(168,167))),(((36,164),(291,419)),((546,674),(802,930)))),((((35,163),(292,420)),((1057,1185),(1313,1441))),(((296,295),(294,421)),((293,422),(424,423))))),(((((34,162),(290,418)),((547,675),(804,932))),(((808,807),(806,933)),((805,934),(936,935)))),((((552,551),(550,677)),((549,678),(680,679))),(((548,676),(803,931)),((1569,1697),(1825,1953)))))),((((((1064,1063),(1062,1189)),((1061,1190),(1192,1191))),(((1060,1188),(1315,1443)),((1570,1698),(1826,1954)))),((((1059,1187),(1316,1444)),((33,161),(289,417))),(((1320,1319),(1318,1445)),((1317,1446),(1448,1447))))),(((((1058,1186),(1314,1442)),((1571,1699),(1828,1956))),(((1832,1831),(1830,1957)),((1829,1958),(1960,1959)))),((((1576,1575),(1574,1701)),((1573,1702),(1704,1703))),(((1572,1700),(1827,1955)),((545,673),(801,929))))))),(((((((48,47),(46,173)),((45,174),(176,175))),(((44,172),(299,427)),((554,682),(810,938)))),((((43,171),(300,428)),((1065,1193),(1321,1449))),(((304,303),(302,429)),((301,430),(432,431))))),(((((42,170),(298,426)),((555,683),(812,940))),(((816,815),(814,941)),((813,942),(944,943)))),((((560,559),(558,685)),((557,686),(688,687))),(((556,684),(811,939)),((1577,1705),(1833,1961)))))),((((((1072,1071),(1070,1197)),((1069,1198),(1200,1199))),(((1068,1196),(1323,1451)),((1578,1706),(1834,1962)))),((((1067,1195),(1324,1452)),((41,169),(297,425))),(((1328,1327),(1326,1453)),((1325,1454),(1456,1455))))),(((((1066,1194),(1322,1450)),((1579,1707),(1836,1964))),(((1840,1839),(1838,1965)),((1837,1966),(1968,1967)))),((((1584,1583),(1582,1709)),((1581,1710),(1712,1711))),(((1580,1708),(1835,1963)),((553,681),(809,937)))))))),((((((((56,55),(54,181)),((53,182),(184,183))),(((52,180),(307,435)),((562,690),(818,946)))),((((51,179),(308,436)),((1073,1201),(1329,1457))),(((312,311),(310,437)),((309,438),(440,439))))),(((((50,178),(306,434)),((563,691),(820,948))),(((824,823),(822,949)),((821,950),(952,951)))),((((568,567),(566,693)),((565,694),(696,695))),(((564,692),(819,947)),((1585,1713),(1841,1969)))))),((((((1080,1079),(1078,1205)),((1077,1206),(1208,1207))),(((1076,1204),(1331,1459)),((1586,1714),(1842,1970)))),((((1075,1203),(1332,1460)),((49,177),(305,433))),(((1336,1335),(1334,1461)),((1333,1462),(1464,1463))))),(((((1074,1202),(1330,1458)),((1587,1715),(1844,1972))),(((1848,1847),(1846,1973)),((1845,1974),(1976,1975)))),((((1592,1591),(1590,1717)),((1589,1718),(1720,1719))),(((1588,1716),(1843,1971)),((561,689),(817,945))))))),(((((((64,63),(62,189)),((61,190),(192,191))),(((60,188),(315,443)),((570,698),(826,954)))),((((59,187),(316,444)),((1081,1209),(1337,1465))),(((320,319),(318,445)),((317,446),(448,447))))),(((((58,186),(314,442)),((571,699),(828,956))),(((832,831),(830,957)),((829,958),(960,959)))),((((576,575),(574,701)),((573,702),(704,703))),(((572,700),(827,955)),((1593,1721),(1849,1977)))))),((((((1088,1087),(1086,1213)),((1085,1214),(1216,1215))),(((1084,1212),(1339,1467)),((1594,1722),(1850,1978)))),((((1083,1211),(1340,1468)),((57,185),(313,441))),(((1344,1343),(1342,1469)),((1341,1470),(1472,1471))))),(((((1082,1210),(1338,1466)),((1595,1723),(1852,1980))),(((1856,1855),(1854,1981)),((1853,1982),(1984,1983)))),((((1600,1599),(1598,1725)),((1597,1726),(1728,1727))),(((1596,1724),(1851,1979)),((569,697),(825,953)))))))))),((((((((((72,71),(70,197)),((69,198),(200,199))),(((68,196),(323,451)),((578,706),(834,962)))),((((67,195),(324,452)),((1089,1217),(1345,1473))),(((328,327),(326,453)),((325,454),(456,455))))),(((((66,194),(322,450)),((579,707),(836,964))),(((840,839),(838,965)),((837,966),(968,967)))),((((584,583),(582,709)),((581,710),(712,711))),(((580,708),(835,963)),((1601,1729),(1857,1985)))))),((((((1096,1095),(1094,1221)),((1093,1222),(1224,1223))),(((1092,1220),(1347,1475)),((1602,1730),(1858,1986)))),((((1091,1219),(1348,1476)),((65,193),(321,449))),(((1352,1351),(1350,1477)),((1349,1478),(1480,1479))))),(((((1090,1218),(1346,1474)),((1603,1731),(1860,1988))),(((1864,1863),(1862,1989)),((1861,1990),(1992,1991)))),((((1608,1607),(1606,1733)),((1605,1734),(1736,1735))),(((1604,1732),(1859,1987)),((577,705),(833,961))))))),(((((((80,79),(78,205)),((77,206),(208,207))),(((76,204),(331,459)),((586,714),(842,970)))),((((75,203),(332,460)),((1097,1225),(1353,1481))),(((336,335),(334,461)),((333,462),(464,463))))),(((((74,202),(330,458)),((587,715),(844,972))),(((848,847),(846,973)),((845,974),(976,975)))),((((592,591),(590,717)),((589,718),(720,719))),(((588,716),(843,971)),((1609,1737),(1865,1993)))))),((((((1104,1103),(1102,1229)),((1101,1230),(1232,1231))),(((1100,1228),(1355,1483)),((1610,1738),(1866,1994)))),((((1099,1227),(1356,1484)),((73,201),(329,457))),(((1360,1359),(1358,1485)),((1357,1486),(1488,1487))))),(((((1098,1226),(1354,1482)),((1611,1739),(1868,1996))),(((1872,1871),(1870,1997)),((1869,1998),(2000,1999)))),((((1616,1615),(1614,1741)),((1613,1742),(1744,1743))),(((1612,1740),(1867,1995)),((585,713),(841,969)))))))),((((((((88,87),(86,213)),((85,214),(216,215))),(((84,212),(339,467)),((594,722),(850,978)))),((((83,211),(340,468)),((1105,1233),(1361,1489))),(((344,343),(342,469)),((341,470),(472,471))))),(((((82,210),(338,466)),((595,723),(852,980))),(((856,855),(854,981)),((853,982),(984,983)))),((((600,599),(598,725)),((597,726),(728,727))),(((596,724),(851,979)),((1617,1745),(1873,2001)))))),((((((1112,1111),(1110,1237)),((1109,1238),(1240,1239))),(((1108,1236),(1363,1491)),((1618,1746),(1874,2002)))),((((1107,1235),(1364,1492)),((81,209),(337,465))),(((1368,1367),(1366,1493)),((1365,1494),(1496,1495))))),(((((1106,1234),(1362,1490)),((1619,1747),(1876,2004))),(((1880,1879),(1878,2005)),((1877,2006),(2008,2007)))),((((1624,1623),(1622,1749)),((1621,1750),(1752,1751))),(((1620,1748),(1875,2003)),((593,721),(849,977))))))),(((((((96,95),(94,221)),((93,222),(224,223))),(((92,220),(347,475)),((602,730),(858,986)))),((((91,219),(348,476)),((1113,1241),(1369,1497))),(((352,351),(350,477)),((349,478),(480,479))))),(((((90,218),(346,474)),((603,731),(860,988))),(((864,863),(862,989)),((861,990),(992,991)))),((((608,607),(606,733)),((605,734),(736,735))),(((604,732),(859,987)),((1625,1753),(1881,2009)))))),((((((1120,1119),(1118,1245)),((1117,1246),(1248,1247))),(((1116,1244),(1371,1499)),((1626,1754),(1882,2010)))),((((1115,1243),(1372,1500)),((89,217),(345,473))),(((1376,1375),(1374,1501)),((1373,1502),(1504,1503))))),(((((1114,1242),(1370,1498)),((1627,1755),(1884,2012))),(((1888,1887),(1886,2013)),((1885,2014),(2016,2015)))),((((1632,1631),(1630,1757)),((1629,1758),(1760,1759))),(((1628,1756),(1883,2011)),((601,729),(857,985))))))))),(((((((((104,103),(102,229)),((101,230),(232,231))),(((100,228),(355,483)),((610,738),(866,994)))),((((99,227),(356,484)),((1121,1249),(1377,1505))),(((360,359),(358,485)),((357,486),(488,487))))),(((((98,226),(354,482)),((611,739),(868,996))),(((872,871),(870,997)),((869,998),(1000,999)))),((((616,615),(614,741)),((613,742),(744,743))),(((612,740),(867,995)),((1633,1761),(1889,2017)))))),((((((1128,1127),(1126,1253)),((1125,1254),(1256,1255))),(((1124,1252),(1379,1507)),((1634,1762),(1890,2018)))),((((1123,1251),(1380,1508)),((97,225),(353,481))),(((1384,1383),(1382,1509)),((1381,1510),(1512,1511))))),(((((1122,1250),(1378,1506)),((1635,1763),(1892,2020))),(((1896,1895),(1894,2021)),((1893,2022),(2024,2023)))),((((1640,1639),(1638,1765)),((1637,1766),(1768,1767))),(((1636,1764),(1891,2019)),((609,737),(865,993))))))),(((((((112,111),(110,237)),((109,238),(240,239))),(((108,236),(363,491)),((618,746),(874,1002)))),((((107,235),(364,492)),((1129,1257),(1385,1513))),(((368,367),(366,493)),((365,494),(496,495))))),(((((106,234),(362,490)),((619,747),(876,1004))),(((880,879),(878,1005)),((877,1006),(1008,1007)))),((((624,623),(622,749)),((621,750),(752,751))),(((620,748),(875,1003)),((1641,1769),(1897,2025)))))),((((((1136,1135),(1134,1261)),((1133,1262),(1264,1263))),(((1132,1260),(1387,1515)),((1642,1770),(1898,2026)))),((((1131,1259),(1388,1516)),((105,233),(361,489))),(((1392,1391),(1390,1517)),((1389,1518),(1520,1519))))),(((((1130,1258),(1386,1514)),((1643,1771),(1900,2028))),(((1904,1903),(1902,2029)),((1901,2030),(2032,2031)))),((((1648,1647),(1646,1773)),((1645,1774),(1776,1775))),(((1644,1772),(1899,2027)),((617,745),(873,1001)))))))),((((((((120,119),(118,245)),((117,246),(248,247))),(((116,244),(371,499)),((626,754),(882,1010)))),((((115,243),(372,500)),((1137,1265),(1393,1521))),(((376,375),(374,501)),((373,502),(504,503))))),(((((114,242),(370,498)),((627,755),(884,1012))),(((888,887),(886,1013)),((885,1014),(1016,1015)))),((((632,631),(630,757)),((629,758),(760,759))),(((628,756),(883,1011)),((1649,1777),(1905,2033)))))),((((((1144,1143),(1142,1269)),((1141,1270),(1272,1271))),(((1140,1268),(1395,1523)),((1650,1778),(1906,2034)))),((((1139,1267),(1396,1524)),((113,241),(369,497))),(((1400,1399),(1398,1525)),((1397,1526),(1528,1527))))),(((((1138,1266),(1394,1522)),((1651,1779),(1908,2036))),(((1912,1911),(1910,2037)),((1909,2038),(2040,2039)))),((((1656,1655),(1654,1781)),((1653,1782),(1784,1783))),(((1652,1780),(1907,2035)),((625,753),(881,1009))))))),(((((((128,127),(126,253)),((125,254),(256,255))),(((124,252),(379,507)),((634,762),(890,1018)))),((((123,251),(380,508)),((1145,1273),(1401,1529))),(((384,383),(382,509)),((381,510),(512,511))))),(((((122,250),(378,506)),((635,763),(892,1020))),(((896,895),(894,1021)),((893,1022),(1024,1023)))),((((640,639),(638,765)),((637,766),(768,767))),(((636,764),(891,1019)),((1657,1785),(1913,2041)))))),((((((1152,1151),(1150,1277)),((1149,1278),(1280,1279))),(((1148,1276),(1403,1531)),((1658,1786),(1914,2042)))),((((1147,1275),(1404,1532)),((121,249),(377,505))),(((1408,1407),(1406,1533)),((1405,1534),(1536,1535))))),(((((1146,1274),(1402,1530)),((1659,1787),(1916,2044))),(((1920,1919),(1918,2045)),((1917,2046),(2048,2047)))),((((1664,1663),(1662,1789)),((1661,1790),(1792,1791))),(((1660,1788),(1915,2043)),((633,761),(889,1017)))))))))));}}}
\end{document}